\documentclass[11pt]{article}
	
    \makeatletter
    \renewcommand\section{\@startsection {section}{1}{\z@}%
                                       {-3.5ex \@plus -1ex \@minus -.2ex}%
                                       {2.3ex \@plus.2ex}%
                                       {\normalfont\fontfamily{phv}\fontsize{16}{19}\bfseries}}
    \renewcommand\subsection{\@startsection{subsection}{2}{\z@}%
                                         {-3.25ex\@plus -1ex \@minus -.2ex}%
                                         {1.5ex \@plus .2ex}%
                                         {\normalfont\fontfamily{phv}\fontsize{14}{17}\bfseries}}
    \renewcommand\subsubsection{\@startsection{subsubsection}{3}{\z@}%
                                        {-3.25ex\@plus -1ex \@minus -.2ex}%
                                         {1.5ex \@plus .2ex}%
                                         {\normalfont\normalsize\fontfamily{phv}\fontsize{14}{17}\selectfont}}
    \makeatother
    
    \usepackage{latexsym}
\usepackage{graphicx}
\usepackage[T1]{fontenc}

\usepackage{amsmath}
\usepackage{amsfonts}
\usepackage{amssymb}
\usepackage{amsbsy}
\usepackage{amsthm}
\usepackage{algorithm}
\usepackage{algorithmic}
\usepackage[svgnames]{xcolor}
\usepackage{subfig}

\usepackage[pdftex,colorlinks=true,urlcolor=blue,citecolor=black,anchorcolor=black,linkcolor=black]{hyperref}

\usepackage{bm}
\newcommand{\BFx}{\bm{x}}
\newcommand{\BFX}{\bm{X}}
\newcommand{\BFy}{\bm{y}}
\newcommand{\BFs}{\bm{s}}
\newcommand{\BFS}{\bm{S}}
\newcommand{\BFsn}{\BFs^{\mathrm{n}}}
\newcommand{\BFsnT}{(\BFs_k^{\mathrm{n}})^\top}
\newcommand{\BFst}{\BFs^{\mathrm{t}}}
\newcommand{\BFstT}{(\BFs^{\mathrm{t}})^\top}
\newcommand{\BFSt}{\BFS^{\mathrm{t}}}
\newcommand{\BFstk}{\BFS_k^{\mathrm{t}}}
\newcommand{\BFstN}{\BFs^{\mathrm{tN}}}
\newcommand{\BFStN}{\BFS^{\mathrm{tN}}}

\newcommand{\BFsnN}{\BFs^{\mathrm{nN}}}
\newcommand{\BFsnR}{\BFs^{\mathrm{nR}}}
\newcommand{\BFc}{\bm{c}}

\newcommand{\BFG}{\bm{G}}
\newcommand{\BFGN}{\bm{G}^\mathrm{N}}
\newcommand{\BFe}{\bm{e}}

\newcommand{\sfB}{\mathsf{B}}

\newcommand{\bfmcX}{\bm{\mathcal{X}}}
\newcommand{\bfB}{\bm{B}}
\newcommand{\bfbeta}{\bm{\beta}}
\newcommand{\sfM}{\mathsf{M}}

\newcommand{\Mt}{M_k^{\mathrm{t}}}
\newcommand{\Mtz}{M_k^{\mathrm{t}0}}
\newcommand{\Mtt}{M_k^{\mathrm{tt}}}

\newcommand{\mn}{m_k^{\mathrm{n}}}
\newcommand{\mnz}{m_k^{\mathrm{n}0}}
\newcommand{\mnn}{m_k^{\mathrm{nn}}}
\newcommand{\Ms}{M_k^\mathrm{s}}
\newcommand{\Mz}{M_k^0}
\newcommand{\Qs}{Q_k^\mathrm{s}}
\newcommand{\Qz}{Q_k^0}
\newcommand{\Qn}{Q_k^\mathrm{n}}
\newcommand{\qs}{q_k^\mathrm{s}}

\newcommand{\Phis}{\Phi_k^\mathrm{s}}
\newcommand{\Phiz}{\Phi_k^0}

\newcommand{\Psikp}{\Psi_{k+1}(\varsigma_{k+1})}
\newcommand{\Psik}{\Psi_{k}(\varsigma_{k})}

\newcommand{\BFcs}{\BFc_k^\mathrm{s}}

\newcommand{\at}{a^{\mathrm{t}}}
\newcommand{\an}{a^{\mathrm{n}}}

\newcommand{\Fbar}{\bar{F}}
\newcommand{\sfH}{\mathsf{H}}
\newcommand{\sfA}{\mathsf{A}}
\newcommand{\sfAN}{\mathsf{A}^\mathrm{N}}
\newcommand{\sfAR}{\mathsf{A}^\mathrm{R}}
\newcommand{\sfHN}{\mathsf{H}^\mathrm{N}}
\newcommand{\Fbars}{\Fbar_k^\mathrm{s}}

\newcommand{\knormal}{\kappa_\mathrm{nsd}}
\newcommand{\knormc}{\kappa_\mathrm{ncr}}
\newcommand{\ktanc}{\kappa_\mathrm{tcr}}
\newcommand{\klinm}{\kappa_\mathrm{flm}}
\newcommand{\kling}{\kappa_\mathrm{flg}}
\newcommand{\khess}{\kappa_{\mathrm{u}\sfH}}

\newcommand{\kerr}{\kappa_\mathrm{e}}

\newcommand{\lipc}{\kappa_{Lc}}
\newcommand{\lipg}{\kappa_{Lg}}

\newcommand{\amax}{\kappa_{\sfA\mathrm{max}}}
\newcommand{\amin}{\kappa_{\sfA\mathrm{min}}}
\newcommand{\fmin}{f_\mathrm{min}}
\newcommand{\sigmax}{\varsigma_\mathrm{max}}
\newcommand{\sigmin}{\varsigma_\mathrm{min}}
\newcommand{\kas}{\kappa_\mathrm{d}}

\newcommand{\keeb}{b_f}
\newcommand{\kerror}{\kappa_\mathrm{err}}

\newcommand{\lagmax}{\Lambda_\mathrm{max}}
\newcommand{\ksn}{\kappa_\mathrm{bon}}
\newcommand{\kmt}{\kappa_\mathrm{mtb}}
\newcommand{\kmtn}{\kappa_\mathrm{mtn}}

\newcommand{\kpsirc}{\kappa_{\mathrm{r}1}}
\newcommand{\kpsirg}{\kappa_{\mathrm{r}2}}
\newcommand{\kdelta}{\kappa_{\Delta}}
\newcommand{\kuc}{\kappa_\mathrm{uc}}
\newcommand{\kug}{\kappa_\mathrm{ug}}
\newcommand{\kphi}{\kappa_{\phi}}
\newcommand{\kmg}{\kappa_\mathrm{mge}}
\newcommand{\kb}{\kappa_{\mathrm{u}\sfB}}

\newcommand{\trdec}{\gamma_\mathrm{dec}}
\newcommand{\trinc}{\gamma_\mathrm{inc}}
\newcommand{\sigbmax}{\varsigma_\mathrm{max}^B}
\newcommand{\ssinf}{\lambda}
\newcommand{\minsd}{\sigma_\mathrm{min}}
\newcommand{\sigmac}{\varsigma^\mathrm{C}}
\newcommand{\sigmab}{\varsigma^\mathrm{B}}
\newcommand{\sigmaxb}{\varsigma^\mathrm{B}_\mathrm{max}}
\newcommand{\Deltamax}{\Delta_\mathrm{max}}

\newcommand{\mbR}{\mathbb{R}}
\newcommand{\mbE}{\mathbb{E}}

\newcommand{\mbP}{\mathbb{P}}

\newcommand{\mcS}{\mathcal{S}}

	\usepackage{amsmath}
	\usepackage{graphicx}
	\usepackage{enumerate}
	\usepackage[numbers]{natbib}
	\usepackage{url} 
    \usepackage{xcolor}
	
    \usepackage{bm}
    \usepackage{multicol}
    \usepackage{lipsum}
    \usepackage{mwe}
    \usepackage{graphicx}
    \usepackage{subfig}
    \usepackage{amssymb}
    \usepackage{amsthm}
    \usepackage{amsfonts}
    \usepackage{epstopdf}
    \usepackage{algorithm}
\usepackage{booktabs}
\usepackage{enumitem}
\usepackage{color,soul}
\usepackage{multirow}
\usepackage{epstopdf}
\usepackage{pifont}

    \newtheorem{theorem}{Theorem}[section]
    \newtheorem{lemma}[theorem]{Lemma}
    \newtheorem{corollary}[theorem]{Corollary}
    
    \newtheorem{assumption}{Assumption}

\begin{document}
		
			\title{\bf Adaptive Sampling Trust Region Optimization for Derivative-free Stochastic Functions and Deterministic Equality Constraints}
			\author{Nicole Felice, Sara Shashaani\footnote{North Carolina State University, Raleigh, NC 27695, USA} , and Lindon Roberts\footnote{University of Melbourne, Melbourne, Australia } }
			\date{}
			\maketitle
			\bigskip
		
	\begin{abstract}
We study optimization problems with noisy zeroth-order objective observations and deterministic nonlinear equality constraints with available derivatives. We propose a constrained variant of the adaptive-sampling trust-region derivative-free optimization algorithm---ASTRO-DF. The method builds quadratic local models from estimated objective values at interpolation points within a moving trust region and promotes feasibility through a Byrd--Omojokun composite-step  based on linearized constraints, following an SQP-like framework. We prove almost sure convergence using a new constrained criticality test and present  numerical results on an equality-constrained stochastic activity network problem.

	\end{abstract}
			
	\noindent%


\section{INTRODUCTION}

\label{sec:intro}
Consider the optimization problem
\begin{align}
\min_{\BFx} \quad & \left\{f(\BFx):=\mbE_\xi[F(\BFx,\xi)]\right\} ,\quad \text{subject to}\quad
 \BFc(\BFx) = 0, \label{eq:stoch opt prob}
\end{align}
where $f:\mbR^d\to\mbR$  and $\BFc:\mbR^d\to\mbR^p$ are nonconvex. We assume that $\BFc$ and the Jacobian of $\BFc$, $\sfA:\mbR^d\to\mbR^{p\times d }$ are deterministic and available. Suppose we only have access to $F:\mathbb{R}^d\times\Xi\to\mathbb{R}$, a function whose input is a decision vector $\BFx\in \mathbb{R}^d$ and a random vector $\xi$ defined in the probability space $(\Xi, \mathcal{F}, \mathbb{P})$ and taking values in $\Xi \subseteq \mathbb{R}^q$. We do not have access to the derivative of $F$ directly, making this a derivative-free optimization (DFO) problem. We will assume LICQ everywhere, so $\operatorname{rank}(\sfA(\BFx))=p$ (i.e.~$p\leq d$), and hence, with $\sfAN(\BFx)$ forming a basis for the null space of $\sfA(\BFx)$, the first-order optimality conditions for \eqref{eq:stoch opt prob} are $\BFc(\BFx^*)=0$ and $\sfAN(\BFx^*)^\top \nabla f(\BFx^*)=0$.


Throughout the paper, we have the following standing assumptions on the function, constraints, and stochastic noise (we will not state these assumptions for every result since they must hold everywhere).
\begin{assumption} \label{as: lip and bounded}
    The objective $f$ 
    is continuously differentiable with $\lipg$-Lipchitz continuous gradients, and $f(\BFx) \geq \fmin$ for all $\BFx \in \mbR^d$. The constraint Lipchitz constant $ \lipc > 0$ satisfies, for any step $\BFs\in\mbR^d$, 
\begin{align}
    \|\BFc(\BFx+\BFs) - \BFc(\BFx) - \sfA(\BFx)\BFs\| &\leq \frac{1}{2} \lipc \|\BFs\|^2 \label{eq: lip const bound}.
\end{align}
Furthermore, 
there exist constants $\kuc,\kug$ such that $\|\nabla f(\BFx)\|\le \kug$ and $\|\BFc(\BFx)\|\le \kuc$ for all $\BFx \in \mbR^d$.
\end{assumption}
\begin{assumption}\label{as: estimation error}
Given $\kerror>0$, for all $\BFx\in\mbR^d$ and $n$ iid realizations $\{\xi_j\}_{j=1}^n$, constants $\sigma^2_f, \keeb >0$  satisfy 
    \begin{align*}
        \mbP \left ( \sum_{j=1}^n (F(\BFx,\xi_j) - f(\BFx)) \geq n \kerror \right) \leq \exp \left ( -\frac{n \kerror}{2 (\keeb \kerror + \sigma^2_f)}\right ).
    \end{align*}
\end{assumption} This assumption requires a sub-exponential tail bound on the sample average approximation of $f(\BFx)$, allowing for unbounded noise to have heavier tails than a Gaussian noise and making the framework applicable for a wide range of problems~\citep{Ha2025}. 

An area of particular interest for DFO is black-box simulation-optimization, wherein 
the only objective information available are output values returned from simulating a specific set of input designs. One such optimization problem is the Stochastic Activity Network (SAN)~\citep{avramidis1991simulation} whose objective is to select the mean duration, $\BFx$, of a set of $d$ tasks where actual task length is exponentially distributed with the given mean, to minimize the expected length of the longest path, $T$, from task $1$ to task $d$: $\min_{\BFx \in \mbR^{d}_+} \mathbb{E}[T(\BFx,\xi)],\text{ s. t. } 
\sum_{j=1}^{d} \frac{e_{j}}{x_j}\ = b.$
The problem has one equality constraint $(p=1)$; the decisions are constrained by the total cost $b$ allowed to be spent reducing mean duration of each task with associated reduction cost $e_j$ for $j= 1, \dots,d$. For such a problem, our goal is to adapt ASTRO-DF, which is an unconstrained stochastic trust-region algorithm with almost-sure convergence~\citep{Shashaani2018}, to handle deterministic nonlinear equality constraints through the integration of Sequential Quadratic Programming (SQP) techniques. We introduce SQP-ASTRO-DF and show that it achieves both first-order optimality conditions almost surely and provide numerical validation of the algorithm on the SAN problem. 




\section{Related Works}
Adaptive sampling methods for unconstrained problems were introduced in \citep{Shashaani2018}, which proved almost-sure convergence of the iterates to a stationary point.
This was extended to a worst-case complexity analysis in \citep{Ha2025}, including when common random numbers are available in the sampling regime.
These methods extend model-based derivative-free optimization (DFO) \citep{Conn2009,Larson2019,Roberts2025} methods to the stochastic regime.
Alternative adaptive sampling approaches for stochastic model-based DFO were introduced in \citep{Blanchet2019,Chen2018,Cao2024}.
The approach from \citep{Shashaani2018} differs from these in that it samples the objective estimator only until the estimated accuracy (sample standard error) is sufficiently small, rather than relying on a predetermined (if adaptive based only on the current trust-region radius) number of samples to achieve probabilistically accurate estimates.

Our SQP approach here is based on the Byrd--Omojokun step decomposition \citep{Omojokun1989}, which has been successfully used for deterministic DFO problems in the COBYQA software \citep{Ragonneau2022}.
Other model-based SQP implementations include \citep{Troltzsch2016,Hannanu2024}.
None of these works include convergence theory; our theoretical results extend the analysis of derivative-based trust-region SQP methods in \citep{Conn2000,Byrd2000} to the derivative-free case under unbounded stochastic noise.
A separate line of work extends SQP methods to problems with uniformly bounded noise (that cannot be removed via sample averaging) in \citep{Oztoprak2023,Sun2024,Oztoprak2026}.

A related line of work is on stochastic SQP methods, which are primarily targeted at machine learning settings where stochastic gradient estimates for the objective are available (and exact function values and gradients are usually assumed for all constraints).
This includes both linesearch \citep{Berahas2021,Curtis2024,ONeill2026} and trust-region methods \citep{Fang2024}, and adaptive methods, where random quantities are assumed to be accurate with some guaranteed probability \citep{Fang2024storm,Fang2026complexity,Fang2026}.
These latter approaches extend the approach of \citep{Chen2018} and related works above to the constrained case (and can in principle be applied to the DFO context, although that is not their main focus), whereas here we handle randomness in objective evaluations using the analysis approach of \citep{Shashaani2018}.

\section{Inexact Function and Gradient Evaluation Using Trust Regions}
Our choice of trust-region methods here is due to their ability to synchronize the estimation and gradient approximation error with the stationarity through an important adaptive algorithm parameter, the trust-region radius. At each iteration $k$, the trust-region radius $\Delta_k$ defines our interpolation set (perturbation used for finite-difference approximation of the gradient), controls the allowable search step's size for the next iterate (which in turn controls the search direction as well), and tracks the stationarity measure at the current iterate that helps with determining the estimation accuracy.

Suppose we are in the deterministic setting and the algorithm generates a sequence of iterates $\{\BFx_k\}$. At each iteration, given a base point $\BFx_k$ and $d'$ interpolation points $\{\BFx_k^i,\ i=1, \dots, d'\}$, if we had access to the true objective function value $f_k := f(\BFx_k)$, we would build a \emph{fully-linear} interpolation model (as will be defined in Theorem~\ref{th: fully linear}) for the objective 
\begin{align}
    f(\BFx) \approx q_k(\BFx) := f_k + g^\top_k(\BFx-\BFx_k) + \frac{1}{2} (\BFx-\BFx_k)^\top \mathsf{B}_k(\BFx-\BFx_k), \label{eq: q}
\end{align}
where $g_k := \nabla q_k(\BFx_k) \in \mbR^d$ is an approximation of $\nabla f_k$ and $\mathsf{B}_k := \widehat{\nabla^2 l} (\BFx_k, \BFy_k) \in \mbR^{d \times d}$ approximates the Hessian of the Lagrangian 
    $$\nabla ^2l (\BFx_k, \BFy_k) : = \nabla ^2f(\BFx_k) - \sum_{i =1}^p [\BFy_{k}]_{i} \nabla ^2c_i(\BFx_k),$$ 
for a set of chosen Lagrange multipliers $\BFy_k \in \mbR^p$. We leave the method of approximating the Hessian unspecified, as any method resulting in a uniformly bounded $\sfB_k$ is sufficient. For a given trust-region radius $\Delta_k$, we make the following assumptions on the accuracy of our constructed model, which are standard in DFO.
\begin{assumption}\label{as: bounded true Hess}
    $\sfB_k$ is uniformly bounded, meaning there exists $\kb > 0 $ such that $\|\sfB_k\| \leq \kb$ for all $k$.
\end{assumption}

\begin{assumption}\label{as: bounded lag poly}
   Let $\{\ell_k^i\}$ be the Lagrange polynomials associated with the $i$-th interpolation point for $i = 0,1,\dots, d'$. We assume that for all $k$, the interpolation set is well-poised over $\Delta_k$, meaning there exists a constant $\lagmax >0$ such that 
   \begin{align*}
        \sup_{\BFx \in B(\BFX_k, \Delta_k)} |\ell^i_k(\BFx)| \leq \lagmax \quad \text{for} \:\: i = 0, 1, \dots, d' .
    \end{align*}
\end{assumption}
\begin{theorem}{\citep{Conn2009}} \label{th: fully linear}
    Suppose Assumptions~\ref{as: lip and bounded}, \ref{as: bounded true Hess}, \ref{as: bounded lag poly} hold. Then, the model $q_k$ \eqref{eq: q} is a fully linear approximation for $f$, meaning there exist $\klinm,\kling>0$ such that $\forall \BFx \in \{\BFX_k+\BFs:\ \BFs\in\mbR^d, \|\BFs\|\le a \Delta_k\}$ 
    \begin{align}
        | f(\BFx) - q_k(\BFx) |  \leq \klinm a^2 \Delta_k^2 \label{eq: fully linear model}\quad \text{and} \quad
        \| \nabla f(\BFx) - \nabla q_k(\BFx) \|  \leq \kling a \Delta_k. 
    \end{align}
\end{theorem}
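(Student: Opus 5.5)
We follow the standard interpolation-error argument of \citep{Conn2009}. Write $\BFX_k$ for the base point $\BFx_k$ and set $\BFy_i := \BFx_k^i - \BFX_k$, with $\|\BFy_i\|\le\Delta_k$. We first bound the gradient error at the base point and then propagate it to the ball of radius $a\Delta_k$; the function-value bound is obtained afterwards by integrating the gradient bound.

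\textbf{Gradient error at the base point.} The model interpolates $f$, so $q_k(\BFx_k^i)=f(\BFx_k^i)$ for every $i$. Let $e(\BFx)=f(\BFx)-q_k(\BFx)$ and $e_g=\nabla f(\BFX_k)-g_k$. Taylor expansion with Lipschitz gradient (Assumption~\ref{as: lip and bounded}) gives
\[
f(\BFX_k+\BFy_i)-f(\BFX_k)-\nabla f(\BFX_k)^\top\BFy_i=O(\lipg\|\BFy_i\|^2).
\]
Subtracting the model expansion $q_k(\BFX_k+\BFy_i)-q_k(\BFX_k)=g_k^\top\BFy_i+\tfrac12\BFy_i^\top\sfB_k\BFy_i$ yields, for each $i$,
\[
|\BFy_i^\top e_g| \le \left(\tfrac12\lipg+\tfrac12\kb\right)\Delta_k^2 .
\]
This uses Assumption~\ref{as: bounded true Hess} and $e(\BFX_k)=0$. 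Stacking the rows $\BFy_i^\top$ into the matrix $Y$ gives $\|Y e_g\|\le\sqrt{d'}(\lipg+\kb)\Delta_k^2/2$.

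\textbf{Converting poisedness into a bound on $Y^{-1}$.} Assumption~\ref{as: bounded lag poly} controls the Lagrange polynomials, and the standard result of \citep{Conn2009} turns this into a bound on the scaled inverse: $\|(Y/\Delta_k)^{-1}\|\le C(d,\lagmax)$. This is the step I expect to be the main obstacle. It requires relating the Lagrange polynomials to the rows of $(Y/\Delta_k)^{-1}$, or, for quadratic models with $d'>d$, to the relevant block of the interpolation system. If that is awkward, we simply cite \citep[Ch.~2--3]{Conn2009}, which is legitimate since the theorem is attributed there. The bound gives $\|e_g\|\le C\sqrt{d'}(\lipg+\kb)\Delta_k/2$.

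\textbf{Extending to the ball of radius $a\Delta_k$.} For $\|\BFs\|\le a\Delta_k$,
\[
\|\nabla f(\BFX_k+\BFs)-\nabla q_k(\BFX_k+\BFs)\| \le \|e_g\| + \lipg\|\BFs\| + \kb\|\BFs\| .
\]
This is at most $\kling a\Delta_k$ for $a\ge1$ once $\kling$ absorbs the constants; for $a<1$ we either restrict to $a\ge1$, as is implicitly used, or define the constants with $\max(a,1)$. For the function error we use $e(\BFX_k)=0$ and the integral form
\[
e(\BFX_k+\BFs)=\int_0^1 \nabla e(\BFX_k+t\BFs)^\top\BFs\,dt .
\]
Its magnitude is bounded by $\|\BFs\|\cdot\sup_t\|\nabla e\|\le a\Delta_k\cdot\kling a\Delta_k$, which gives $\klinm a^2\Delta_k^2$ with $\klinm=\kling$.
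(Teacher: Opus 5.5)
Your argument is correct and is the standard interpolation error bound of \citep{Conn2009}: bound $Y e_g$ row by row, invert using poisedness, propagate by Lipschitz continuity, then integrate. The paper states this result by citation only and gives no proof of its own, so your route is exactly what it relies on. Your remark about $a<1$ is also right: constants independent of $a$ exist only for $a\ge 1$, and otherwise they must depend on $a$.

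The step you flag closes in one line, since $Y$ is $2d\times d$ rather than square. For the coordinate stencil, keep only the rows $\Delta_k(\BFe^j)^\top$, which gives $\|e_g\|\le\sqrt{d}\,(\lipg+\kb)\Delta_k/2$ directly. In general, $Y/\Delta_k$, padded with the zero row of the base point, is a column block of the scaled square Vandermonde matrix. That matrix's inverse is bounded in terms of $d$ and $\lagmax$, so $Y/\Delta_k$ has a pseudo-inverse with the same bound.
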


In the stochastic setting, the sequence of iterates becomes stochastic, which we denote by $\{\BFX_k\}$. We use sample-average approximation to estimate $f$ at the iterate and interpolation set 
as well as at a candidate solution $\BFX_k^s$ recommended by the trust-region subproblem. The estimator function is denoted $$\Fbar(\BFx,n)=\frac{1}{n}\sum_{j=1}^{n}F(\BFx,\xi_j)$$ using iid $\xi_j$'s. Instead of a fixed sample size $n$, we use an adaptive one, $N_k^i$, to be just large enough to ensure an estimate of required accuracy. 
To approximate the first and second order information, we perform interpolation over the interpolation set $\bfmcX_k=\{\BFX_k^i, i=0,1,\cdots,d'\}$ of $d'$ design points that satisfies Assumption~\ref{as: bounded lag poly}. We determine the necessary number of replications at each point using the following adaptive sampling rule and constants $\minsd, \kas >0$
\begin{align}\label{eq: adapt N}
   N_k^i := N\left ( \BFX^i_k\right )=\min \left \{n:\:\frac{\max\left\{ \minsd, \hat \sigma (\BFX_k^i
   ,n) \right \}}{\sqrt{n}} \leq \kas \frac{\Delta_k^2}{\sqrt{\ssinf_k}}\right \}\quad i\in\{0,1,\ldots,d',s\}
\end{align}
where  $$\hat \sigma(\BFx,n) = ((n-1)^{-1} \sum_{j=1}^n \left ( F(\BFx, \xi_j) - \Fbar(\BFx,n)\right)^2)^{1/2},$$ 
and  $\ssinf_k = \mathcal{O}(\log^{1+\epsilon}k)$, for some small $\epsilon>0$, is a deterministic sequence that deflates the accuracy requirement in the right-hand side of \eqref{eq: adapt N}. Here $\minsd$ is a user-defined parameter avoiding an early stopping for the sample size due to a probable 
largely underestimated $\hat \sigma(\BFx,n)$.

Given a base point $\BFX_k$, we estimate $q_k$ as defined in \eqref{eq: q} by $Q_k$ as will be described next. Following the interpolation strategy of  \citep{Ha04052025}, $d'=2d$ interpolation points are selected along coordinate basis within $\Delta_k$ of the center point $\BFX_k^0: = \BFX_k$ of:
\begin{align*}
    \bfmcX_k:=\left \{\BFX_k^0, \BFX_k^1 = \BFX_k^0 +\BFe^1 \Delta_k, \dots, \BFX_k^d = \BFX_k^0 + \BFe^d \Delta_k, \BFX^{d+1} = \BFX_k^0 - \BFe^1 \Delta, \dots, \BFX_k^{2d} = \BFX_k^0 - \BFe^d \Delta_k \right \},
\end{align*} 
where $ \{ \BFe^1, \dots, \BFe^d\}$ forms the $\mbR^d$ standard basis. Let $$\bfB(\BFx) := (1, [\BFx]_1, [\BFx]_2, \dots,[\BFx]_d, \frac{1}{2}[\BFx]^2_1, \frac{1}{2}[\BFx]_2^2, \dots, \frac{1}{2}[\BFx]_d^2)$$ be a polynomial basis on $\mbR^d$. We then solve $$\sfM_k(\bfB, \bfmcX_k) \bfbeta= [\Fbar(\BFX_k,N^0_k); (\Fbar(\BFX_k^i, N^i_k)) - \Fbar(\BFX_k, N^0_k))_{i=1}^{2d}]^\top$$ for  $\bfbeta = ( \beta_0, \beta_1, \dots, \beta_{2d})$, where $\sfM_k$ is an invertible Vandermonde matrix, 
to build a fully-linear local model that approximates $f$ in $\Delta_k$-neighborhood of 
$\BFX_k$: 
\begin{align}
 Q_k(\BFX_k+\BFs) := \Fbar(\BFX_k,N_k) + \BFG_k^\top\BFs + \frac{1}{2}\BFs^\top \sfH_k\BFs, \quad \BFs \in \mbR^d,\ \|\BFs\|\le \Delta_k.\label{eq: Q model 2}
\end{align}
In \eqref{eq: Q model 2}, $\BFG_k : =[\beta_1,\ldots,\beta_d]$ is the approximated noisy gradient $\nabla f(\BFX_k)$ and $$\sfH_k : = \text{diag} \left ( [\beta_{d+1}, \dots, \beta_{2d}]\right ) - \sum_{i =1}^p [\BFy_{k}]_{i} \nabla ^2c_i(\BFx_k), $$ approximated by any method that results in uniform boundedness of $\sfH_k$.

To simplify notation, we will use $f_k^i:=f(\BFX_k^i),\Fbar_k^i:=\Fbar(\BFX_k^i,N_k^i)$ for $i\in\{0,1,\ldots,2d,\mathrm{s}\}$. Then, using the interpolation set $\bfmcX_k$, we can express the models in terms of the Lagrange polynomials $\ell^i_k$ for $i=0,1, \dots, 2d$ \citep{Roberts2025}:
\begin{align*}
    q_k(\BFx) = \sum_{i =0}^{2d} f_k^i \ell^i_k(\BFx), \quad \text{and} \quad Q_k(\BFx) = \sum_{i =0}^{2d} \Fbar_k^i\ell^i_k(\BFx).
\end{align*}

Following the needed assumptions to obtain Theorem~\ref{th: fully linear}, we add the following for the stochastic setting.
\begin{assumption}\label{as: bounded hess}
   There exists a constant $\khess$ such that almost surely $\|\sfH_k\|\leq \khess -1$ for all $k$. 
\end{assumption}

The next two theorems adress the error of the model $Q_k$ and its gradient at the evaluated points.
\begin{theorem}{\citep{Ha2025}}\label{th: est error bound}
If $\Fbar(\BFX_k^i, N_k^i)$ is the estimate of $f(\BFX_k^i)$ using the adaptive sampling rule \eqref{eq: adapt N}, then for any user-defined constant $\kerror >0$, $\alpha\in\{0,1,2\}$ and $i\in\{0,1,\ldots,2d,s\}$
\begin{align*}
    \mbP\left ( \left| \Fbar(\BFX_k^i, N_k^i) - f(\BFX_k^i) \right | > \kerror \Delta^\alpha_k \quad \text{i.o.}\right) = 0.
\end{align*}
\end{theorem}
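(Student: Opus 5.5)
We argue via the first Borel--Cantelli lemma: it suffices to show that $\sum_k \mbP\bigl(|\Fbar(\BFX_k^i,N_k^i)-f(\BFX_k^i)|>\kerror\Delta_k^\alpha\bigr)<\infty$ for each fixed $i$ and $\alpha$. Since $\Delta_k$ may be small, the case $\alpha=2$ is the hardest and we treat it; $\alpha\in\{0,1\}$ then follow by the same argument (on the event $\Delta_k\le\Deltamax$, rescaling $\kerror$, or by rerunning the argument with the weaker threshold). We condition on the $\sigma$-algebra $\mathcal{F}_{k}$ generated by the history up to choosing $\BFX_k^i$ and $\Delta_k$. Given this history, the point and radius are fixed, and the fresh replications $\xi_j$ are iid and independent of the past. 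Hence Assumption~\ref{as: estimation error} can be applied pointwise in $\BFx$. The two-sided bound is obtained by applying the assumption to $F$ and, symmetrically, to $-F$, which at most doubles the constant; we would state this symmetric version as implicit in the assumption.

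The central difficulty is that $N_k^i$ is a random stopping time that depends on the same samples. We handle this by a union bound over the possible values of $n$. The key observation from \eqref{eq: adapt N} is that $\max\{\minsd,\hat\sigma\}\ge\minsd$, so necessarily
\[
N_k^i\ \ge\ n_k^{\min}:=\frac{\minsd^2\ssinf_k}{\kas^2\Delta_k^4}.
\]
The stopping rule also gives, at $n=N_k^i$, that the standard error is at most $\kas\Delta_k^2/\sqrt{\ssinf_k}$. Then
\[
\mbP\bigl(|\Fbar_k^i-f_k^i|>\kerror\Delta_k^2\mid\mathcal{F}_k\bigr)\le\sum_{n\ge n_k^{\min}}\mbP\Bigl(\Bigl|\sum_{j=1}^n(F(\BFX_k^i,\xi_j)-f_k^i)\Bigr|\ge n\kerror\Delta_k^2\Bigm|\mathcal{F}_k\Bigr).
\]
Each summand is bounded through Assumption~\ref{as: estimation error} with accuracy $\kerror\Delta_k^2$ in place of $\kerror$, giving $2\exp\bigl(-n\kerror\Delta_k^2/(2(\keeb\kerror\Delta_k^2+\sigma_f^2))\bigr)$. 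Summing this geometric tail from $n_k^{\min}$ yields at most a constant times
\[
\frac{1+\sigma_f^2/\Delta_k^2}{\kerror}\,\exp\!\Bigl(-c\,\frac{\ssinf_k}{\Delta_k^2}\Bigr),\qquad c=\frac{\minsd^2\kerror}{2\kas^2(\keeb\kerror\Deltamax^2+\sigma_f^2)}.
\]

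The main obstacle is making this summable in $k$ uniformly over the unknown, random $\Delta_k$. The plan is to use that $\Delta_k\le\Deltamax$, and that the function $t\mapsto t^{-1}e^{-c\ssinf_k/t}$ with $t=\Delta_k^2$ is bounded by $C\,e^{-c'\ssinf_k}$ for a slightly smaller $c'$ once $\ssinf_k$ exceeds a threshold. The prefactor's singularity as $\Delta_k\to0$ is dominated by the exponential, since $e^{-x}x\le e^{-x/2}$. Taking expectations removes the conditioning, so
\[
\mbP(\cdot)\le C e^{-c'\ssinf_k}.
\]
Because $\ssinf_k$ grows like $\log^{1+\epsilon}k$, we have $e^{-c'\ssinf_k}=k^{-c'\log^{\epsilon}k}$, which is eventually at most $k^{-2}$ and therefore summable. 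Borel--Cantelli then gives the "i.o." probability zero. A final union over the finitely many indices $i\in\{0,\dots,2d,s\}$ and $\alpha\in\{0,1,2\}$ completes the argument. An alternative, cruder but uniform way to neutralize the $1/\Delta_k^2$ factor is to bound it against $e^{-c\ssinf_k/(2\Delta_k^2)}$.
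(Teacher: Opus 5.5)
The paper gives no proof of this statement; it is imported from \citep{Ha2025}. Your skeleton is the standard one: condition on the past, use $\minsd$ to force $N_k^i\ge n_k^{\min}$, control all $n\ge n_k^{\min}$ at once, then apply Borel--Cantelli with $\ssinf_k$ growing faster than $\log k$. The reduction of $\alpha\in\{0,1\}$ to $\alpha=2$ via $\Delta_k^\alpha\ge\Delta_k^2/\Deltamax^{2-\alpha}$ and your two-sidedness remark are fine.

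The gap is the step where the prefactor is ``dominated by the exponential.'' Your exponent $c\,\ssinf_k/\Delta_k^2$ arises only because you apply Assumption~\ref{as: estimation error} at the vanishing accuracy $\kerror\Delta_k^2$, keeping the same $\sigma_f^2,\keeb$ and an exponent \emph{linear} in the accuracy. For the assumption exactly as typed your computation is right, but that reading is vacuous. At accuracy $1/\sqrt{n}$ its right-hand side tends to $0$ as $n\to\infty$, whereas by the CLT the left-hand side tends to a positive constant unless $F$ has zero variance. If instead the constants may depend on the given accuracy, your substitution is not licensed and $c$ is not a constant.

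The meaningful version is Bernstein's, with $\kerror^2$ in the numerator. Then:
\begin{itemize}
\item the $n$-th summand is $2\exp\bigl(-n\kerror^2\Delta_k^4/(2(\sigma_f^2+\keeb\kerror\Delta_k^2))\bigr)$;
\item the exponent at $n=n_k^{\min}$ is $\minsd^2\kerror^2\ssinf_k/(2\kas^2(\sigma_f^2+\keeb\kerror\Delta_k^2))$, which is of order $\ssinf_k$ with no $\Delta_k^{-2}$ gain;
\item summing the geometric tail costs a factor of order $\sigma_f^2/(\kerror^2\Delta_k^4)$.
\end{itemize}
Your bound therefore becomes $C\Delta_k^{-4}e^{-c\ssinf_k}$, which need not be summable. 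A priori you only know $\Delta_k\ge\Delta_0\trdec^{k}$, while $\ssinf_k$ grows only polylogarithmically. The union bound over $n$ is too lossy because the events for neighbouring $n$ are nearly identical.

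To repair this, replace the union bound by a maximal inequality with dyadic peeling. Let $S_n=\sum_{j=1}^n(F(\BFX_k^i,\xi_j)-f_k^i)$. Doob's inequality for the submartingale $e^{\theta S_n}$ (or Etemadi's inequality, if only tail bounds are available) gives the same Bernstein bound for $\max_{n\le m}S_n$, up to absolute constants. Hence
\[
\mbP\Bigl(\sup_{n\ge n_0}\frac{S_n}{n}\ge t\Bigr)\le\sum_{r\ge0}\mbP\Bigl(\max_{n<2^{r+1}n_0}S_n\ge 2^{r}n_0t\Bigr)\le\sum_{r\ge0}\exp\Bigl(-2^{r}\,\frac{n_0t^2}{2(2\sigma_f^2+\keeb t)}\Bigr),
\]
and symmetrically for $-S_n$. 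Take $n_0=n_k^{\min}$ and $t=\kerror\Delta_k^2$. Then $n_0t^2=\minsd^2\kerror^2\ssinf_k/\kas^2$ is free of $\Delta_k$, and $\keeb t\le\keeb\kerror\Deltamax^2$. So the doubly exponential series is at most $Ce^{-c\ssinf_k}$ for large $k$, uniformly in $\Delta_k$. This is exactly the input your Borel--Cantelli step needs. Note that it requires $\ssinf_k$ to grow at least like $\log^{1+\epsilon}k$, which is a lower-growth condition rather than the $\mathcal{O}(\cdot)$ written in the paper.
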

\begin{theorem}{\citep{Ha2025}}\label{th: gradient error bound}
   Suppose Assumption~\ref{as: bounded hess} holds and let $Q_k$ of $f$ be a stochastic quadratic model with diagonal Hessian constructed as defined in \eqref{eq: Q model 2} on $\{\BFX_k+\BFs:\ \BFs\in\mbR^d,\ \|\BFs\|\le \Delta_k\}$. Then, given $\kerror>0$ (as in Theorem~\ref{th: est error bound}), the constant $$\kmg=\sqrt\frac{d}{2}\left(\frac{\lipg+\khess}{2}+2\kerror\right)$$ satisfies 
\begin{align}\label{eq: grad error}
    \mbP\left (\left \|  \nabla Q_k (\BFX_k) - \nabla f(\BFX_k)\right \| > \kmg \Delta_k \quad \text{i.o.} \right) = 0. 
\end{align} 
\end{theorem}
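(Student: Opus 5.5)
We prove Theorem~\ref{th: gradient error bound} by splitting the gradient error into a deterministic interpolation part and a stochastic estimation part, and bounding each coordinate using the explicit coordinate-stencil structure of $\bfmcX_k$. Because the basis $\bfB$ has only linear and pure-quadratic terms and the points are $\BFX_k\pm\Delta_k\BFe^j$, the interpolation system decouples coordinate by coordinate. It gives the central-difference formula
\[
[\BFG_k]_j=\frac{\Fbar_k^{j}-\Fbar_k^{d+j}}{2\Delta_k},\qquad j=1,\dots,d .
\]
This can be checked directly from $\sfM_k\bfbeta$, since the quadratic coefficient cancels in the difference of the $+$ and $-$ equations. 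Writing $\tilde q_k$ for the same interpolant built from the true values $f_k^i$, we use the triangle inequality
\[
\|\nabla Q_k(\BFX_k)-\nabla f(\BFX_k)\|\le \|\nabla \tilde q_k(\BFX_k)-\nabla f(\BFX_k)\|+\|\nabla Q_k(\BFX_k)-\nabla\tilde q_k(\BFX_k)\|.
\]

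\emph{Deterministic term.} For each $j$ we Taylor-expand $f$ along $\pm\Delta_k\BFe^j$ using the $\lipg$-Lipschitz gradient, which gives $|f_k^{j}-f_k^0-\Delta_k\partial_jf(\BFX_k)|\le \tfrac{\lipg}{2}\Delta_k^2$ and the analogous bound for $f_k^{d+j}$. Subtracting, the central difference differs from $\partial_j f(\BFX_k)$ by at most $\tfrac{\lipg}{2}\Delta_k$. Where the Hessian part of the model enters the bookkeeping (e.g.\ if $\nabla Q_k$ is compared through the quadratic model with $\sfH_k$, or if one-sided differences are used), we would add a term $\tfrac{\khess}{2}\Delta_k$ using Assumption~\ref{as: bounded hess}. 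Either way, the per-coordinate deterministic error is at most $\tfrac{\lipg+\khess}{2}\Delta_k$.

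\emph{Stochastic term.} Per coordinate, this error is $\big|(\Fbar_k^j-f_k^j)-(\Fbar_k^{d+j}-f_k^{d+j})\big|/(2\Delta_k)$. We invoke Theorem~\ref{th: est error bound} with $\alpha=2$ for each of the finitely many indices $i\in\{0,\dots,2d\}$. By a union over finitely many null events, almost surely there is a random $K$ such that for $k\ge K$ all $|\Fbar_k^i-f_k^i|\le\kerror\Delta_k^2$. The coordinate error is then at most $\kerror\Delta_k$. If the intercept $\Fbar_k^0$ also enters (through the subtraction in the right-hand side of the Vandermonde system), the bound is at most $2\kerror\Delta_k$.

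\emph{Combining.} For $k\ge K$ each coordinate error is bounded by $c\Delta_k$ with $c=\tfrac{\lipg+\khess}{2}+2\kerror$. Summing squares over the $d$ coordinates gives a bound $\sqrt d\,c\,\Delta_k$; the sharper $\sqrt{d/2}$ constant would come from pairing the $\pm$ contributions more carefully. Hence the event in \eqref{eq: grad error} occurs only for $k<K$, i.e.\ not infinitely often, and its probability is zero.

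The main obstacle is Theorem~\ref{th: est error bound} itself. It is applied at points $\BFX_k^i$ that are random, and with sample sizes $N_k^i$ that are stopping times depending on $\Delta_k$, which is itself random. This requires the Borel--Cantelli argument with the $\ssinf_k=\mathcal{O}(\log^{1+\epsilon}k)$ inflation and Assumption~\ref{as: estimation error}, conditioned on the past. Since that result is given, the remaining risk is only in matching the exact constant $\kmg$.
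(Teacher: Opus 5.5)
\textbf{There is a genuine gap in the final step: the $\sqrt{d/2}$ constant is asserted rather than proved, and it cannot be proved from these assumptions.}

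The paper gives no proof of this statement; it imports it from \citep{Ha2025}. So your argument must stand on its own, and up to the last step it does:
\begin{itemize}
\item On the $\pm\Delta_k\BFe^j$ stencil, $\nabla Q_k(\BFX_k)=\BFG_k$ is exactly the vector of central differences.
\item The Lipschitz-gradient Taylor bound gives a per-coordinate deterministic error of at most $\frac{\lipg}{2}\Delta_k$.
\item Theorem~\ref{th: est error bound} with $\alpha=2$ at the $2d$ stencil points (a finite union of null events) gives a per-coordinate sampling error of at most $\kerror\Delta_k$ for all $k\ge K$.
\end{itemize}
Since $\sfH_k$ and $\Fbar_k^0$ cancel exactly in the central difference, your hedged inflation to $\frac{\lipg+\khess}{2}+2\kerror$ is pure slack. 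What you have actually shown is
\[
\|\nabla Q_k(\BFX_k)-\nabla f(\BFX_k)\|\le \sqrt d\left(\frac{\lipg}{2}+\kerror\right)\Delta_k\quad\text{for all } k\ge K .
\]

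Your combination step yields $\sqrt d\,c\,\Delta_k$, which is $\sqrt2$ times larger than $\kmg\Delta_k=\sqrt{d/2}\,c\,\Delta_k$. You then assert that ``pairing the $\pm$ contributions more carefully'' recovers the missing factor. There is no pairing left to do, because the central difference already is that pairing. Moreover, the factor cannot be recovered, because the deterministic bound is attained in every coordinate at once.

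Take $f(\BFy)=\frac{\lipg}{2}\sum_j[\BFy]_j\,|[\BFy]_j|$ near $\BFX_k=0$, modified far away to satisfy Assumption~\ref{as: lip and bounded}, with noise-free evaluations. Then:
\begin{itemize}
\item $\nabla f(\BFy)=\lipg(|[\BFy]_1|,\dots,|[\BFy]_d|)$ is $\lipg$-Lipschitz and $\nabla f(0)=0$.
\item Every central difference equals $\frac{\lipg}{2}\Delta_k$.
\item Every diagonal curvature estimate $\beta_{d+j}$ is $0$. So with no constraint-curvature term, $\sfH_k=0$ and $\khess=1$ is admissible.
\end{itemize}
The gradient error is therefore exactly $\sqrt d\,\frac{\lipg}{2}\Delta_k$ at every $k$ with $\BFX_k=0$. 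This exceeds $\kmg\Delta_k$ whenever $(\sqrt2-1)\lipg>1+4\kerror$.

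In general, your bound implies the stated one precisely when $(\sqrt2-1)\lipg\le\khess+2(2-\sqrt2)\kerror$. You should therefore conclude with the constant $\sqrt d(\frac{\lipg}{2}+\kerror)$, or your $\sqrt d\,c$, rather than claim $\kmg$ as printed. Every later use of $\kmg$ (in $\kmtn$ and $\kdelta$) needs only some fixed constant, so nothing downstream is affected by this correction.
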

These two results guarantee with probability one that, given a user-defined $\kerror$, any realization of the random sequence $\{\BFX_k\}$ eventually has the estimation error bounded  by $\kerror\Delta_k^2$ and the model gradient error bounded by $\kmg\Delta_k$.


\section{SQP-ASTRO-DF Algorithm}

\begin{algorithm}
\caption{SQP-ASTRO-DF} \label{alg: sqp}
\textbf{Inputs:} Initial point $\BFx_0 \in \mbR^d$, initial trust-region radius $\Delta_0 >0$, initial penalty parameter $\sigmin >0$.\\
\textbf{Parameters:} $\eta \in (0,1)$, $0 < \trdec < 1 \leq \trinc$, $\an, \at \in (0,1)$, $\nu \in (0,1)$, $\tau_1 \geq 1$, $\tau_2 > 0$ (with $(\tau_1-1)\tau_2 >0$), $\mu>0$, $\sigbmax >0$, $\Deltamax > \Delta_0$. 
    \begin{algorithmic}[1]
        \FOR{$k = 0,1,2, \dots$}
            \STATE Build the fully linear model $Q_k$ using $N_k^i$ i.i.d. replications satisfying \eqref{eq: adapt N} at each interpolation point $\{\BFX^i_k\}_{i=0,1, \dots, 2d}$ to obtain $\Fbar_k^i$.
            \STATE Compute $\BFsn_k$ \eqref{eq: normal step} satisfying Assumptions \ref{as: normal enough} and \ref{as: normal cauchy}.
            \STATE Compute $\BFSt_k$ \eqref{eq: tangent step} satisfying Assumption~\ref{as: tangent cauchy}  to get $\BFS_k := \BFsn_k + \BFSt_k$, and set $\BFX_k^s=\BFX_k+\BFS_k$.
            \STATE Given $\mnz - \mnn$, $\Mtz - \Mtt$, and $\Qz - \Qn$, set $\sigmac_k$ according to \eqref{eq: sigma c}. \label{line: set sigma c}
            \STATE Set the penalty parameter
            \[
                \varsigma_k : = \begin{cases}\max\{ \sigmac_k, \tau_1\varsigma_{k-1}, \varsigma_{k-1} + \tau_2\}, \quad &\text{if} \quad \varsigma_{k-1} < \sigmac_k,\\
                \varsigma_{k-1} ,&\text{o.w.}
                \end{cases}
            \] \label{line: set sigma}
            \STATE Evaluate $\BFc_k^s$ and simulate $N_k^s$ i.i.d. replications to obtain $\Fbar_k^s$ and compute $\rho_k$  and $\pi_k$ \eqref{eq: improve ratio}.
            \IF{$\rho_k \geq \eta$ and $\pi_k \geq \mu \Delta_k$}
            \STATE Set $\BFX_{k+1} = \BFX_k^s$ and $\Delta_{k+1} = \min \{ \trinc \Delta_k, \Deltamax\}$.\label{line: deltamax}
            \ELSE 
            \STATE Set $\BFX_{k+1} = \BFX_k$ and $\Delta_{k+1} = \trdec \Delta_k$.       
            \ENDIF
        \ENDFOR
    \end{algorithmic}
\end{algorithm}

Here, we describe Algorithm~\ref{alg: sqp} and the standard SQP operations within our stachastic framework. To determine the quality of a given solution $\BFx$, we use the following 2-norm merit function
\begin{align}
    \Phi(\BFx,\varsigma) := \bar F(\BFx, N(\BFx)) + \varsigma \|\BFc(\BFx)\|, \label{eq: est merit}
\end{align}
where $\varsigma>0 \in \mbR$ is a penalty parameter that controls how strictly the constraints are prioritized. At iteration $k$, we approximate \eqref{eq: est merit} with
\begin{align}
    M_k(\BFx) = Q_k(\BFx) + \varsigma_k\|\BFc_k + \sfA_k(\BFx-\BFX_k)\|
\end{align}
using the induced merit function given $Q_k$, $\sfA_k := \sfA(\BFX_k)$ and $\BFc_k := \BFc(\BFX_k)$, and a linear approximation of the constraints $\BFc(\BFx) \approx \BFc_k+ \sfA_k(\BFx-\BFX_k)$. At each iteration, we determine the next candidate point  $\BFX_{k}^s := \BFX_k+ \BFS_k$ with step $\|\BFS_k\| \leq a\Delta_k, a \in (0,2)$ that approximately minimizes $M_k$. For ease of exposition, we use the shortened notation $\Mz:=M_k(\BFX_{k}),\Ms:=M_k(\BFX_{k}^s)$, $\Qz : = Q_k(\BFX_k)$, $\Qs:=Q_k(\BFX_{k}^s)$, $\Phiz:=\Phi(\BFX_k,\varsigma_k)$, $\Phis:=\Phi(\BFX_k^s,\varsigma_k)$, $\BFcs := \BFc(\BFX_k^s)$, and $\qs:= q_k (\BFX_{k}^s)$. In the interest of space, we provide only brief outlines for the proofs of Lemmas~\ref{lem: model error}--\ref{lem: finite Psi}.
\begin{lemma}\label{lem: model error}
    Suppose Assumptions~\ref{as: bounded lag poly} and \ref{as: bounded hess} hold. Then for any $\|\BFS_k\| \leq \Delta_k$,
    \begin{align}
        \mbP \left (|\Phis - \Ms| > \kerr (1+ \varsigma_k) \Delta^2_k \quad \text{i.o.}\right)=0,
    \end{align}
    where $\kerr := \kerror (1+ (2d+1)\lagmax)+a^2(\klinm+ \frac{1}{2} \lipc)$. 
\end{lemma}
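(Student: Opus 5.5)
We split $\Phis - \Ms$ into an objective part and a constraint part:
\begin{align*}
\Phis - \Ms = \bigl(\Fbars - \Qs\bigr) + \varsigma_k\bigl(\|\BFcs\| - \|\BFc_k + \sfA_k\BFS_k\|\bigr).
\end{align*}
Each part is bounded separately, and the triangle inequality combines them.

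\emph{Constraint part.} This part is deterministic. By the reverse triangle inequality and \eqref{eq: lip const bound},
\begin{align*}
\bigl|\|\BFcs\| - \|\BFc_k + \sfA_k\BFS_k\|\bigr| \le \|\BFc(\BFX_k+\BFS_k) - \BFc_k - \sfA_k\BFS_k\| \le \tfrac12\lipc\|\BFS_k\|^2 \le \tfrac12 \lipc a^2\Delta_k^2.
\end{align*}
Here $\|\BFS_k\|\le a\Delta_k$ is used. The statement writes $\|\BFS_k\|\le\Delta_k$, but the constant $\kerr$ carries $a^2$, so the bound with $a$ is the intended one. This part contributes $\varsigma_k \tfrac12 a^2\lipc\Delta_k^2$.

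\emph{Objective part.} We insert the true value and the deterministic model:
\begin{align*}
\Fbars - \Qs = (\Fbars - f_k^s) + (f_k^s - \qs) + (\qs - \Qs).
\end{align*}
The first term is at most $\kerror\Delta_k^2$ except finitely often, by Theorem~\ref{th: est error bound} with $\alpha=2$ and $i=s$.

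The second term is at most $\klinm a^2\Delta_k^2$ by the full linearity in Theorem~\ref{th: fully linear}, applied at $\BFX_k^s$, which lies within $a\Delta_k$ of $\BFX_k$.

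For the third term, the Lagrange representation gives $\qs - \Qs = \sum_{i=0}^{2d}(f_k^i - \Fbar_k^i)\ell_k^i(\BFX_k^s)$. Assumption~\ref{as: bounded lag poly} bounds $|\ell_k^i(\BFX_k^s)|\le\lagmax$, and Theorem~\ref{th: est error bound} with $\alpha=2$ bounds each $|f_k^i-\Fbar_k^i|\le\kerror\Delta_k^2$ eventually. The third term is therefore at most $(2d+1)\lagmax\kerror\Delta_k^2$.

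\emph{Main obstacle.} The delicate step is the probabilistic bookkeeping for the third term. Theorem~\ref{th: est error bound} is stated per index $i$, and we need all $2d+2$ events (indices $0,\dots,2d$ and $s$) to hold simultaneously for all large $k$. Since there are finitely many indices, a union of finitely many null "i.o." events is still null. So almost surely there is a finite $K$ beyond which all the bounds hold together.

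A second subtlety is that $\BFX_k^s$ lies within $a\Delta_k$ with $a$ possibly up to $2$, which may fall outside the ball of Assumption~\ref{as: bounded lag poly}. We would either extend $\lagmax$ to the enlarged ball $B(\BFX_k,a\Delta_k)$ or note that quadratic Lagrange polynomials on a ball scaled by $a<2$ remain bounded by a constant. We adopt the former and absorb it into $\lagmax$.

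\emph{Combining.} Summing, for all $k\ge K$ almost surely,
\begin{align*}
|\Phis-\Ms| \le \bigl[\kerror(1+(2d+1)\lagmax) + a^2\klinm\bigr]\Delta_k^2 + \varsigma_k\tfrac12 a^2\lipc\Delta_k^2 \le \kerr(1+\varsigma_k)\Delta_k^2,
\end{align*}
since each bracketed piece is at most $\kerr$. The event $|\Phis-\Ms|>\kerr(1+\varsigma_k)\Delta_k^2$ therefore occurs only finitely often with probability one.
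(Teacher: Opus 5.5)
Your proposal is correct and follows the paper's own argument. You split $\Phis-\Ms$ into an objective part and a constraint part. The objective part is bounded by the sampling error at $\BFX_k^s$, the fully-linear error, and the $\lagmax$-weighted sampling errors at the $2d+1$ interpolation points; the constraint part is bounded via \eqref{eq: lip const bound}. You also supply details the paper's sketch leaves implicit: the finite union of null ``i.o.'' events, the use of $\|\BFS_k\|\le a\Delta_k$ instead of $\Delta_k$, and the Lagrange bound on the enlarged ball.
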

\begin{proof}
Given Assumption~\ref{as: bounded lag poly}, by Theorem~\ref{th: est error bound} taking $\alpha=2$ for sufficiently large $k$, we can use Theorem~\ref{th: fully linear} to get $$ \|\Fbars - \Qs \| \leq (\kerror + \klinm a^2 + (2d+1)\kerror\lagmax) \Delta_k^2.$$ Then  
\[
|\Phis - \Ms| \leq (\kerror + \klinm a^2+ (2d+1)\kerror\lagmax) \Delta_k^2 + \varsigma_k \frac{\lipc}{2}\|\BFS_k\|^2,
\] is obtained by using \eqref{eq: lip const bound}.
\end{proof}

We use a Byrd--Omojokun approach \citep{Omojokun1989} to determine $\BFS_k$, decomposing it into a normal and tangent step, that is, $\BFS_k = \BFsn_k + \BFSt_k\in  \mbR^d$. The normal step (which given the iterate will be deterministic)
\begin{align}
    \BFsn_k \approx \underset{\BFsn \in \mbR^d }{\arg \min}\: \left\{\mn(\BFsn) : = \|\sfA_k\BFsn+ \BFc_k\|:\ \|\BFsn\| \leq \an\Delta_k \right\}, \label{eq: normal step}
\end{align}
is computed first to improve feasibility for some $\an \in (0,1)$. Once $\BFsn_k$ has been determined, the tangent step (which will depend on the new estimated quantities and hence stochastic denoted with a capital letter)
\begin{align}
    \BFSt_k\approx \underset{\BFst \in \mbR^d}{\arg \min}\: \left\{\Mt (\BFst) := (\BFG_k+\sfH_k\BFsn_k)^\top \BFst + \frac{1}{2}\BFstT\sfH_k\BFst :\ \sfA_k \BFst = 0,
    \: \|\BFst\| \leq \at\Delta_k\right\}, \label{eq: tangent step}
\end{align}
uses the remaining trust-region budget to improve the quality of the objective while respecting the linearized constraints for some $\at \in (0,1)$ and $a=\an+\at$.
We ensure that the main function of $\BFsn_k$ is to improve feasibility, requiring that $\BFsn_k$ be approximately orthogonal to $\BFSt_k$. Let $\sfAR_k \in \mbR^{d \times p}$ and $\sfAN_k \in \mbR^{d \times (d-p)}$ form a basis for the column space of $\sfA^\top_k$ and the null space of $\sf A_k$ respectively. Thus the normal step can be written $ \BFsn_k = \sfAR_k \BFsnR_k + \sfAN_k \BFsnN_k. $
We impose the following assumptions on our construction of $\BFsn_k$.
\begin{assumption} \label{as: ortho AN}
      At each iteration $k$, the matrix $\sfAN_k$ has orthonormal columns, i.e., $\|\sfAN_k\|=1$, and the singular values of $\sfAR_k$ match those of $\sfA_k^\top$ (via QR factorization), implying $\|\sfAR_k \BFsnR\| =  \|\sfA_k\BFsnR\|$ for any $\BFsnR \in \mbR ^{d-p}$. 
\end{assumption}
To ensure that the majority of $\BFsn_k$ is composed of $\BFsnR_k$ we have the following assumption.

\begin{assumption} \label{as: normal enough}
The normal step computed in \eqref{eq: normal step} satisfies $\|\BFsnN_k\| \leq \knormal \|\BFsnR_k\|$ for some $\knormal > 0$, and $\BFsnR_k =0$ whenever $\sfAN_k \BFc_k = 0$. This implies $\BFsn_k = 0$ whenever $\sfAN_k \BFc_k = 0$.
\end{assumption}
Let $\mnz: = \mn(0)$ and $\mnn : = \mn( \BFsn)$. We assume that the normal step computed by the algorithm approximately reduces the normal model as follows. 
\begin{assumption} \label{as: normal cauchy}
    The normal step $\BFsn_k$ computed in \eqref{eq: normal step} satisfies
    , with a constant $\knormc\in(0,1)$,
    \begin{align*}
        \mnz - \mnn \geq \knormc \frac{\|\sfA_k^\top\BFc_k\|}{\|\BFc_k\|} \min \left( \an \Delta_k, \frac{\amin\|\BFc_k\|}{1+\|\sfA_k^\top\sfA_k\|}\right).
    \end{align*} 
\end{assumption} This assumption, as shown in \citep{Conn2000}, is achievable by a Cauchy step. \eqref{eq: tangent step} can then be transformed into the following unconstrained trust-region problem with $\BFSt = \sfAN\BFStN$
\begin{align}
    \BFStN_k \approx & \underset{\BFstN \in \mbR^d}{\arg\min}\  \left\{(\BFGN_k) ^\top \BFstN + \frac{1}{2}(\BFstN)^\top \sfHN_k\BFstN:\  \|\sfAN_k\BFstN\| \leq \at\Delta_k\right\}, \label{eq: tangent step unconstrained}
\end{align}
where $$\BFGN_k := \sfAN_k(\BFG_k+ \sfH_k \BFsn_k)$$ and $$\sfHN_k := (\sfAN_k)^\top\sfH_k\sfAN_k.$$ We then set $$\BFSt_k:=\sfAN_k\BFStN_k.$$ 
Letting $\Mtz : = \Mt(0)$ and $\Mtt : = \Mt(\BFSt_k)$, we have the following assumption on the improvement in the tangent model, which again is achievable by a Cauchy step \citep{Conn2000}. 
\begin{assumption}\label{as: tangent cauchy}
    The tangent step $\BFstk$ computed in \eqref{eq: tangent step unconstrained} satisfies 
    $\|\BFstk\| \leq \at \Delta_k$ and with a $\ktanc \in (0,1)$, 
    \begin{align*}
        \Mtz - \Mtt \geq \ktanc \|\BFGN_k\| \min \left( \at \Delta_k, \frac{\|\BFGN_k\|}{\khess}\right ).
    \end{align*}
    \end{assumption}
    
At each iteration, the expansion or contraction of $\Delta_{k+1}$ as well as acceptance of $\BFX_k^s$ as the new iterate is determined using the model improvement ratio, $\rho_k$, and a new criticality measure, $\pi_k$, defined as 
\begin{align}\label{eq: improve ratio}
    \rho_k := \frac{\Phiz - \Phis}{\Mz - \Ms} \quad \text{and} \quad \pi_k := \|\BFc_k\| + \| (\sfAN_k)^\top \BFG_k\|.
\end{align}
A successful iteration deems $\Delta_{k+1} = \trinc \Delta_k$ if $\rho_k \geq \eta$ and $\pi_k \geq \mu \Delta_k$, given parameters $\eta$ and $\mu >0$ and otherwise $\Delta_{k+1} = \trdec \Delta_k$ for $0 < \trdec < 1 \leq \trinc$. This is designed so that $\Delta_k \to 0$ (which we later prove almost surely) implies $\pi_k \to 0$, meaning both $\|\BFc_k\| \to 0$ and $\|(\sfAN_k)^\top \BFG_k\| \to 0$ and the algorithm approaches feasibility and stationarity in the limits. 
The following model improvement simplifications will be used in later proofs. The improvement in the objective model after taking a normal step $\BFsn_k$ can be simplified to 
\begin{align}\label{eq: Qn}
    \Qz - \Qn := Q_k(\BFX_k) - Q_k(\BFX_k+\BFsn_k) = -\BFG_k^\top \BFsn_k - \frac{1}{2}\BFsnT \sfH_k \BFsn_k  ,
\end{align}
and similarly the improvement in the merit function model after taking a composite step $\BFS_k$ is
\begin{equation}\label{eq:model-decompose}
    \Mz-\Ms = \Mtz - \Mtt + \varsigma_k( \mnz - \mnn) + \left(\Qz -\Qn\right).
\end{equation}
If the normal and tangent step are properly chosen to improve their respective models, the first two components on the right-hand-side of \eqref{eq:model-decompose} will be positive. However, it is very possible that because $\BFsn_k$ is chosen without consideration of the objective, the improvement in the objective after taking such a step, i.e., $\left(\Qz -\Qn\right)$ may be negative. To ensure $\Mz-\Ms>0$ we enforce
\begin{align}\label{eq: allowable sigma}
    \Mz-\Ms \geq \nu \varsigma_k(\mnz-\mnn),
\end{align}
where $\nu \in (0,1)$. To satisfy \eqref{eq: allowable sigma} we compute the smallest allowable $\varsigma_k$ as
\begin{align}\label{eq: sigma c}
    \sigmac_k := \max \left (\sigmab_k , \frac{-\left[ ( \Qz - \Qn)+\left( \Mtz - \Mtt\right )\right ] }{(1- \nu) (\mnz - \mnn)} \right )
\end{align}
for some $\sigmab_k \in \left [0, \sigmaxb\right ]$ with $\sigmaxb>0$ chosen to prevent a long sequence of small incremental increases in $\sigmac_k$. Note, $\varsigma_k$ is non-decreasing so the smallest penalty value is the initial value $\sigmin$.

\section{Convergence Analysis}
We first state an additional assumption, which is again standard, to simplify the analysis.
\begin{assumption} \label{as: A bounded}
There exist $\amin>0$ and $\amax > 0$ such that $\amin \leq \sigma_i(\sfA_k)  \leq \amax$ for all $k$ and all singular values $\sigma_i$ of $\sfA_k$. 
  
\end{assumption} This is a common assumption for deterministic nonlinear constraints that also yields $$\amin \|\BFc_k\| \leq \|\sfA_k^\top \BFc_k\| \leq \amax \|\BFc_k\|$$ for all $\BFc_k$, by the classical LICQ assumption. 

Given that Assumptions~\ref{as: ortho AN}, \ref{as: normal enough}, \ref{as: normal cauchy}, and \ref{as: A bounded} hold, we have the following result from  \citep{Conn2009}.

\begin{lemma}\label{lem:norm step bound by c}
For all iterations $k$  the constant $$\ksn :=\min\left(\knormc \amin,\frac{\knormc\amin^4}{2(1+\amax^2)(\amax+\knormal)}\right)$$ satisfies
\begin{align}\label{eq: norm step bound by c}
    \ksn \|\BFsn_k\| \leq \mnz-\mnn \leq \|\BFc_k\|.
\end{align}
\end{lemma}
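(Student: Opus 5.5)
The upper bound is immediate. Since $\mnz = \|\BFc_k\|$ and $\mnn = \|\sfA_k\BFsn_k + \BFc_k\| \geq 0$, we get $\mnz - \mnn \leq \|\BFc_k\|$.

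For the lower bound I plan to chain Assumption~\ref{as: normal cauchy} with an upper bound on $\|\BFsn_k\|$ in terms of $\|\BFc_k\|$ and $\Delta_k$. First, if $\BFc_k = 0$, then $\sfAN_k\BFc_k=0$ in the sense of Assumption~\ref{as: normal enough}, so $\BFsn_k = 0$ and there is nothing to prove. Otherwise, Assumption~\ref{as: A bounded} gives $\|\sfA_k^\top\BFc_k\|/\|\BFc_k\| \geq \amin$ and $\|\sfA_k^\top\sfA_k\|\le \amax^2$. Assumption~\ref{as: normal cauchy} then yields
\begin{align*}
\mnz-\mnn \geq \knormc\amin \min\left(\an\Delta_k, \frac{\amin\|\BFc_k\|}{1+\amax^2}\right).
\end{align*}

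Next I bound $\|\BFsn_k\|$ from above in two ways. Trivially, $\|\BFsn_k\| \le \an\Delta_k$. For a bound in terms of $\|\BFc_k\|$, I use the decomposition $\BFsn_k = \sfAR_k\BFsnR_k + \sfAN_k\BFsnN_k$ with Assumptions~\ref{as: ortho AN} and \ref{as: normal enough}:
\begin{align*}
\|\BFsn_k\| \le \|\sfAR_k\BFsnR_k\| + \|\BFsnN_k\| \le (\amax+\knormal)\|\BFsnR_k\|.
\end{align*}
To control $\|\BFsnR_k\|$ by $\|\BFc_k\|$, I note that $\sfA_k\BFsn_k = \sfA_k\sfAR_k\BFsnR_k$, whose norm is at least $\amin^2\|\BFsnR_k\|$ since $\sfA_k\sfAR_k$ has singular values comparable to $\sfA_k\sfA_k^\top$. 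Also, $\|\sfA_k\BFsn_k\| \le \|\sfA_k\BFsn_k+\BFc_k\| + \|\BFc_k\| \le 2\|\BFc_k\|$, because $\mnn\le\mnz$ (this decrease is itself implied by Assumption~\ref{as: normal cauchy}). Together these give
\begin{align*}
\|\BFsn_k\| \le \frac{2(\amax+\knormal)}{\amin^2}\|\BFc_k\|.
\end{align*}

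Finally I split on which term attains the minimum. If the minimum is $\an\Delta_k$, then $\mnz-\mnn \ge \knormc\amin\an\Delta_k \ge \knormc\amin\|\BFsn_k\|$. Otherwise,
\begin{align*}
\mnz-\mnn \ge \frac{\knormc\amin^2}{1+\amax^2}\|\BFc_k\| \ge \frac{\knormc\amin^4}{2(1+\amax^2)(\amax+\knormal)}\|\BFsn_k\|.
\end{align*}
Taking the smaller constant in the two cases gives $\ksn$.

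The main obstacle is the step $\|\sfA_k\sfAR_k\BFsnR_k\|\ge\amin^2\|\BFsnR_k\|$. This depends on exactly how Assumption~\ref{as: ortho AN} normalizes $\sfAR_k$ (via QR, $\sfAR_k$ sharing the singular values of $\sfA_k^\top$). I would verify it by writing $\sfAR_k = \sfA_k^\top R$-type factors and arguing in the row space; the stated power $\amin^4$ in $\ksn$ suggests that this is precisely where the second factor of $\amin^2$ enters.
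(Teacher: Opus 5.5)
Your proof is correct and reproduces $\ksn$ exactly. The constant $\knormc\amin$ comes from the branch where $\an\Delta_k$ attains the minimum. The constant $\knormc\amin^4/(2(1+\amax^2)(\amax+\knormal))$ comes from the other branch combined with your bound $\|\BFsn_k\|\le 2(\amax+\knormal)\amin^{-2}\|\BFc_k\|$.

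The paper gives no argument for this lemma; it only imports it from the trust-region SQP literature. So you have written a self-contained derivation of a result the paper merely cites, and the exact match of constants suggests it is the intended composite-step argument.

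For the step you flagged, argue directly in the row space rather than through a factorization $\sfAR_k=\sfA_k^\top T$. Equal singular values of $\sfAR_k$ and $\sfA_k^\top$ do not force $\sigma_{\min}(T)\ge 1$, so a submultiplicative bound through $T$ can lose the constant. Instead, take a thin SVD $\sfA_k=U\Sigma V^\top$. Every $w$ in the row space of $\sfA_k$ has the form $w=Vz$, so $\|\sfA_k w\|=\|\Sigma z\|\ge\amin\|w\|$. The vector $w=\sfAR_k\BFsnR_k$ lies in that space, and $\|\sfAR_k\BFsnR_k\|\ge\amin\|\BFsnR_k\|$ by Assumptions~\ref{as: ortho AN} and \ref{as: A bounded}. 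Together these give $\|\sfA_k\BFsn_k\|\ge\amin^2\|\BFsnR_k\|$, which is exactly where the second factor $\amin^2$ enters, as you guessed.

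Your treatment of $\BFc_k=0$ is also right. The condition ``$\sfAN_k\BFc_k=0$'' in Assumption~\ref{as: normal enough} is dimensionally ill-formed as printed, and reading it as $\BFc_k=0$ (equivalently $\sfA_k^\top\BFc_k=0$) is the sensible interpretation. That case genuinely needs the assumption, because Assumption~\ref{as: normal cauchy} divides by $\|\BFc_k\|$ and so cannot supply $\mnn\le\mnz$ there.
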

The next lemma shows that, with probability 1, for large enough $k$ the model reduction is at least a combination of reductions due the normal and tangent steps. 
\begin{lemma}\label{lem: model bound by tanget and normal}
   Under Assumptions \ref{as: bounded hess}, \ref{as: ortho AN}, \ref{as: normal enough}, \ref{as: normal cauchy}, and \ref{as: A bounded} the  constant $$\kmtn:=\left (\frac{\kmg \Deltamax + \kug}{\ksn}  + \frac{\khess \kuc}{2 \ksn^2}\right )$$ satisfies
\begin{align*}
   \mbP \left ( \Mz - \Ms < \Mtz - \Mtt + (\varsigma_k - \kmtn) (\mnz - \mnn)\quad \text{i.o.}\right) = 0.
\end{align*}
\end{lemma}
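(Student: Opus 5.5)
Starting from the decomposition \eqref{eq:model-decompose}, $\Mz-\Ms = \Mtz-\Mtt + \varsigma_k(\mnz-\mnn) + (\Qz-\Qn)$, it is enough to show that, almost surely for all sufficiently large $k$,
\[
\Qz - \Qn \ge -\kmtn(\mnz-\mnn).
\]
By \eqref{eq: Qn} we have $\Qz-\Qn = -\BFG_k^\top\BFsn_k - \tfrac12 \BFsnT\sfH_k\BFsn_k$. I would therefore bound $|\BFG_k^\top\BFsn_k|$ and $|\BFsnT\sfH_k\BFsn_k|$ separately, each by a multiple of $\mnz-\mnn$. The main tool is Lemma~\ref{lem:norm step bound by c}, which gives $\|\BFsn_k\|\le (\mnz-\mnn)/\ksn$.

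For the linear term, write $\|\BFG_k\|\le \|\nabla f(\BFX_k)\| + \|\BFG_k-\nabla f(\BFX_k)\|$. Since $\BFG_k=\nabla Q_k(\BFX_k)$, Theorem~\ref{th: gradient error bound} shows that the event $\|\BFG_k-\nabla f(\BFX_k)\|>\kmg\Delta_k$ occurs only finitely often almost surely. Combined with $\Delta_k\le\Deltamax$ (line~\ref{line: deltamax} of Algorithm~\ref{alg: sqp}) and $\|\nabla f\|\le\kug$ (Assumption~\ref{as: lip and bounded}), this yields $\|\BFG_k\|\le \kmg\Deltamax+\kug$ for all large $k$. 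Hence
\[
|\BFG_k^\top\BFsn_k|\le \frac{\kmg\Deltamax+\kug}{\ksn}(\mnz-\mnn)
\]
eventually, almost surely.

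For the quadratic term, Assumption~\ref{as: bounded hess} gives $\|\sfH_k\|\le\khess$. Using the bound on $\|\BFsn_k\|$ twice in different forms, I write
\[
\tfrac12|\BFsnT\sfH_k\BFsn_k|\le \tfrac{\khess}{2}\|\BFsn_k\|^2\le \frac{\khess}{2\ksn^2}(\mnz-\mnn)^2 .
\]
The right-hand inequality of Lemma~\ref{lem:norm step bound by c} then gives $(\mnz-\mnn)^2\le \|\BFc_k\|(\mnz-\mnn)\le \kuc(\mnz-\mnn)$, where the last step uses Assumption~\ref{as: lip and bounded}. This is exactly the $\khess\kuc/(2\ksn^2)$ part of $\kmtn$.

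Summing the two bounds gives $\Qz-\Qn\ge -\kmtn(\mnz-\mnn)$ whenever the gradient-error event fails. The event in the statement is therefore contained in the gradient-error event of Theorem~\ref{th: gradient error bound}, which has probability zero of occurring infinitely often.

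The one delicate point is the probabilistic bookkeeping. The deterministic inequalities hold at every $k$, so the only randomness is the bound on $\|\BFG_k\|$, and the containment of the "i.o." events should be stated explicitly. One should also note that when $\mnz-\mnn=0$, Assumption~\ref{as: normal enough} gives $\BFsn_k=0$, so both sides of the target inequality vanish and no division by zero occurs.
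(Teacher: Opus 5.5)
Your proposal is correct and follows the paper's proof: you decompose via \eqref{eq:model-decompose} and \eqref{eq: Qn}, bound $\|\BFG_k\|\le\kmg\Deltamax+\kug$ eventually via Theorem~\ref{th: gradient error bound} and $\Delta_k\le\Deltamax$, and use Lemma~\ref{lem:norm step bound by c} together with $\|\BFc_k\|\le\kuc$ to turn both $\|\BFsn_k\|$ and $\|\BFsn_k\|^2$ into multiples of $\mnz-\mnn$, which reproduces $\kmtn$ exactly. You make explicit the steps and the i.o.\ event containment that the paper compresses; the closing remark about $\mnz-\mnn=0$ is unnecessary, since you never divide by it (and $\BFsn_k=0$ in that case already follows from Lemma~\ref{lem:norm step bound by c}, not Assumption~\ref{as: normal enough}).
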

\begin{proof}
    If we use an adaptive sampling scheme as described in \eqref{eq: adapt N}, by Assumption~\ref{as: lip and bounded}, Theorem~\ref{th: gradient error bound}, and the triangle inequality for sufficiently large $k$, $$ \|\BFG_k \| \leq \kmg \Deltamax + \kug$$ following Line~\ref{line: deltamax} of Algorithm~\ref{alg: sqp}. Then we have 
\begin{align*}
     \Mz-\Ms \geq  \Mtz - \Mtt + \varsigma_k( \mnz - \mnn) - \kmtn
     ( \mnz - \mnn),
\end{align*} by Assumption~\ref{as: bounded hess}, Lemma~\ref{lem:norm step bound by c} and using \eqref{eq: Qn} and \eqref{eq:model-decompose}. 
\end{proof}
An important corollary follows ensuring that the merit function parameter will remain at its capped value almost surely.
\begin{corollary}\label{cor: bounded pentalty}
 Suppose Assumptions \ref{as: bounded hess}, \ref{as: ortho AN}, \ref{as: normal enough}, \ref{as: normal cauchy}, \ref{as: tangent cauchy}, and \ref{as: A bounded} hold. There exists $\sigmax>0$ such that $$\mbP(\varsigma_k \neq  \sigmax\ \text{i.o.})=0.$$
\end{corollary}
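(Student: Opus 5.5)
The plan is to show that, on the probability-one event from Lemma~\ref{lem: model bound by tanget and normal}, the threshold $\sigmac_k$ is eventually bounded by a deterministic constant. Since $\varsigma_k$ is non-decreasing and each increase is of at least a fixed amount, it can then increase only finitely often and must stabilize.

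\textbf{Step 1 (bounding $\sigmac_k$).} Fix a sample path in the complement of the event in Lemma~\ref{lem: model bound by tanget and normal}, intersected with the complement of the event in Theorem~\ref{th: gradient error bound}; this has probability one. Then there is a (path-dependent) $K$ such that for all $k\ge K$ the bound $\|\BFG_k\|\le \kmg\Deltamax+\kug$ holds. The proof of Lemma~\ref{lem: model bound by tanget and normal} gives, via \eqref{eq: Qn}, Assumption~\ref{as: bounded hess} and Lemma~\ref{lem:norm step bound by c}, the estimate $-(\Qz-\Qn)\le \kmtn(\mnz-\mnn)$. Assumption~\ref{as: tangent cauchy} gives $\Mtz-\Mtt\ge 0$. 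Hence, whenever $\mnz-\mnn>0$,
\[
\frac{-\left[(\Qz-\Qn)+(\Mtz-\Mtt)\right]}{(1-\nu)(\mnz-\mnn)}\le \frac{\kmtn}{1-\nu}.
\]
Combined with $\sigmab_k\le\sigmaxb$, this yields $\sigmac_k\le \bar\varsigma:=\max\{\sigmaxb,\kmtn/(1-\nu)\}$ for all $k\ge K$.

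\textbf{Step 1, degenerate case.} The case $\mnz-\mnn=0$ needs separate treatment. By Lemma~\ref{lem:norm step bound by c} it forces $\BFsn_k=0$, so $\Qz-\Qn=0$. Under the convention that the second term in \eqref{eq: sigma c} is then ignored (or equal to $-\infty$, since its numerator is $\le 0$), the bound $\sigmac_k\le\sigmaxb$ still holds. I expect this to be the main point to argue carefully: the paper's definition \eqref{eq: sigma c} implicitly needs this convention. I would state it explicitly and note that \eqref{eq: allowable sigma} holds trivially in that case.

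\textbf{Step 2 (finitely many increases).} By Line~\ref{line: set sigma}, every increase satisfies $\varsigma_k\ge\varsigma_{k-1}+\min\{(\tau_1-1)\varsigma_{k-1},\tau_2\}\ge \varsigma_{k-1}+\min\{(\tau_1-1)\sigmin,\tau_2\}$, because $\varsigma$ never drops below $\sigmin$. I would read the parameter condition as guaranteeing that this increment $\delta>0$, since $\tau_2>0$ and $\tau_1\ge1$; in fact the $\varsigma_{k-1}+\tau_2$ term alone already gives increments of size at least $\tau_2$. An increase at iteration $k\ge K$ requires $\varsigma_{k-1}<\sigmac_k\le\bar\varsigma$. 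Therefore after time $K$ at most $\lceil(\bar\varsigma-\sigmin)/\delta\rceil+1$ increases can occur, and every later iterate keeps $\varsigma_k$ fixed.

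\textbf{Step 3 (conclusion).} Steps 1 and 2 show that on this sample path $\varsigma_k$ is eventually constant, equal to some finite value. Its limit is bounded above by $\bar\varsigma$ plus one overshoot of size $\max\{\tau_1\bar\varsigma,\bar\varsigma+\tau_2\}$. The limit itself depends on the sample path. To match the statement's single constant $\sigmax$, I would interpret $\sigmax$ as this path-wise limit, which is bounded by the uniform deterministic constant $\max\{\tau_1\bar\varsigma,\bar\varsigma+\tau_2\}$. This matches the paper's phrase that the penalty ``remains at its capped value.'' With that interpretation, $\mbP(\varsigma_k\ne\sigmax\ \text{i.o.})=0$ follows directly.
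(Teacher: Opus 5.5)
Your argument is correct and is essentially the paper's. On the probability-one event of Lemma~\ref{lem: model bound by tanget and normal}, you eventually get $\sigmac_k\le\max\{\sigmaxb,\kmtn/(1-\nu)\}$, and since every increase of $\varsigma_k$ is by at least $\tau_2$, only finitely many increases occur. Your handling of the case $\mnz-\mnn=0$, and your reading of $\sigmax$ as a path-dependent limit, make explicit what the paper leaves implicit.

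One caveat: drop the claim that this limit is bounded by the deterministic constant $\max\{\tau_1\bar\varsigma,\bar\varsigma+\tau_2\}$. Before the random time $K$, $\|\BFG_k\|$ (and hence $\sigmac_k$) may be arbitrarily large under unbounded noise, so increases then can push $\varsigma_{K-1}$ above that constant. The limit is only almost surely finite, which is all the statement needs under your interpretation.
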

\begin{proof}
From Assumption~\ref{as: tangent cauchy}, we have $\Mtz - \Mtt \ge 0$. So for sufficiently large $k$, Lemma~\ref{lem: model bound by tanget and normal} implies $$\Mz-\Ms \geq (\varsigma_k - \kmtn) (\mnz - \mnn)$$ and \eqref{eq: allowable sigma} holds whenever $\varsigma_k \geq \frac{\kmtn}{1-\nu}$. Following Line~\ref{line: set sigma c} and \ref{line: set sigma} of Algorithm~\ref{alg: sqp}, either $\varsigma_k = \varsigma_{k-1}$ or $$\varsigma_k \geq \varsigma_{k-1} + \max \{ \tau_1, \sigmin \tau_2\},$$ where the latter only occurs if $\varsigma_{k-1} < \sigmac_k$. However, if $$\varsigma_{k-1} > \max \left( \frac{\kmtn}{1-\nu}, \sigmaxb\right ),$$ the above reasoning shows $\varsigma_{k-1} \geq \sigmac_k$ and $\varsigma_k$ remains unchanged.  
\end{proof}

In the next lemma, we prove that for large enough $k$, the model reduction will be at least a fraction of the reduction in the tangent model almost surely.
\begin{lemma} \label{lem: model improvment bound by tangent}Suppose Assumptions \ref{as: bounded hess}, \ref{as: ortho AN}, \ref{as: normal enough}, \ref{as: normal cauchy}, \ref{as: tangent cauchy}, and \ref{as: A bounded} hold. Then $$\kmt : = \frac{1}{2} \min \left ( 1, \frac{\nu \sigmax}{\kmtn- \sigmax}\right )$$ satisfies
\[\mbP(\Mz - \Ms < \kmt \left(\Mtz - \Mtt\right)\text{ i.o.})=0.\]

\end{lemma}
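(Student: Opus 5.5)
Lemma~\ref{lem: model improvment bound by tangent} is proved by intersecting two almost-sure events and then splitting into cases according to the size of the tangent reduction compared with the normal reduction. Let $\Omega_1$ be the event on which Lemma~\ref{lem: model bound by tanget and normal} eventually holds, i.e.\ $\Mz-\Ms\ge \Mtz-\Mtt+(\varsigma_k-\kmtn)(\mnz-\mnn)$ for all large $k$. Let $\Omega_2$ be the event of Corollary~\ref{cor: bounded pentalty}, on which $\varsigma_k=\sigmax$ for all large $k$. Both have probability one, so $\Omega_1\cap\Omega_2$ does too. Fix a realization in it and $k$ beyond both thresholds. The rule \eqref{eq: sigma c} and the penalty update in Line~\ref{line: set sigma} then give $\varsigma_k=\sigmax\ge\sigmac_k$. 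So \eqref{eq: allowable sigma} holds:
\[
\Mz-\Ms\ge\nu\sigmax(\mnz-\mnn).
\]
Also $\Mtz-\Mtt\ge0$ by Assumption~\ref{as: tangent cauchy} and $\mnz-\mnn\ge0$ by Lemma~\ref{lem:norm step bound by c}.

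\emph{Case 1: $\sigmax\ge\kmtn$.} The bound from Lemma~\ref{lem: model bound by tanget and normal} gives $\Mz-\Ms\ge \Mtz-\Mtt\ge \tfrac12(\Mtz-\Mtt)\ge\kmt(\Mtz-\Mtt)$, since $\kmt\le\tfrac12$. The expression $\frac{\nu\sigmax}{\kmtn-\sigmax}$ in $\kmt$ is only meaningful when $\sigmax<\kmtn$. When $\sigmax\ge\kmtn$ I read the minimum as equal to $1$, so $\kmt=\tfrac12$ and this case closes.

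\emph{Case 2: $\sigmax<\kmtn$.} Set $D:=\mnz-\mnn\ge0$ and $T:=\Mtz-\Mtt\ge0$.
\begin{itemize}
\item If $(\kmtn-\sigmax)D\le \tfrac12 T$, Lemma~\ref{lem: model bound by tanget and normal} gives $\Mz-\Ms\ge T-(\kmtn-\sigmax)D\ge\tfrac12 T\ge\kmt T$.
\item Otherwise $D>\frac{T}{2(\kmtn-\sigmax)}$. Then \eqref{eq: allowable sigma} gives $\Mz-\Ms\ge\nu\sigmax D>\frac{\nu\sigmax}{2(\kmtn-\sigmax)}T\ge\kmt T$.
\end{itemize}
Hence on $\Omega_1\cap\Omega_2$ the event $\Mz-\Ms<\kmt(\Mtz-\Mtt)$ occurs only finitely often, which is the claim.

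The main obstacle is making sure \eqref{eq: allowable sigma} really holds eventually. It relies on $\varsigma_k$ being at least the computed $\sigmac_k$, which must be read off from Line~\ref{line: set sigma} together with the stabilization from Corollary~\ref{cor: bounded pentalty}. If $\varsigma_k$ stays constant, the "otherwise" branch of the update forces $\varsigma_{k-1}\ge\sigmac_k$, hence $\varsigma_k\ge\sigmac_k$. The remaining work is the case bookkeeping for the sign of $\kmtn-\sigmax$.
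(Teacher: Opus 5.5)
Your proof is correct and follows the paper's argument. On the almost-sure event where Lemma~\ref{lem: model bound by tanget and normal} and Corollary~\ref{cor: bounded pentalty} both eventually hold, you split on whether $(\kmtn-\sigmax)(\mnz-\mnn)\le\frac12(\Mtz-\Mtt)$: in that case you use the lemma's bound, and otherwise you use \eqref{eq: allowable sigma} with $\varsigma_k=\sigmax$, with your separate treatment of the sign of $\kmtn-\sigmax$ and of why $\varsigma_k\ge\sigmac_k$ only making explicit what the paper leaves implicit.
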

\begin{proof}
    Choose $k$ sufficiently large such that Lemma~\ref{lem: model bound by tanget and normal} and Corollary~\ref{cor: bounded pentalty} both hold and examine two cases: $$-\frac{1}{2} \left ( \Mtz - \Mtt\right ) \leq ( \sigmax - \kmtn) ( \mnz - \mnn)$$ and otherwise. Using Lemma~\ref{lem: model bound by tanget and normal} and letting $\kmt = \frac{1}{2}$ proves the first case. Alternatively, Assumptions~\ref{as: normal cauchy} and \ref{as: tangent cauchy} imply $$\Mtz - \Mtt\ge 0, \mnz - \mnn \geq 0,$$ which only holds if $\kmtn \geq \sigmax$. Using \eqref{eq: allowable sigma} and $\varsigma_k = \sigmax$ proves the lemma for $$\kmt =\frac{\nu \sigmax}{2(\kmtn- \sigmax)}. $$
\end{proof}

We define the \emph{scaled} merit function as $$\Psi_k(\varsigma)=\frac{\Phi_k^0(\varsigma)-f_\mathrm{min}}{\varsigma}=\frac{\Fbar_k-f_\mathrm{min}}{\varsigma}+\|\BFc_k\|.$$ Observe that if $k$ is successful, then $$\Psikp-\Psik\le -\eta \frac{\Mz-\Ms}{\varsigma_k}$$ where we use $\varsigma_k$ being non-decreasing and upper bounded by Corollary~\ref{cor: bounded pentalty}. Conversely, for unsuccessful iterations, we can write using Theorem~\ref{th: est error bound} that for large enough $k$ and a user-defined $\kerror$, almost surely 
\begin{equation}\label{eq:unsuccess psi reduction}
    \Psikp-\Psik\le \frac{\Fbar_{k+1}-\Fbar_k}{\varsigma_k}\le \frac{2\kerror\Delta_k^{\alpha}}{\sigmin}
\end{equation}
for $\alpha\in\{0,1,2\}$ and using the fact that for an unsuccessful iteration $\Delta_{k+1}=\trdec\Delta_k<\Delta_k$. 

We next show a useful model reduction result from how the merit penalty is selected in Algorithm~\ref{alg: sqp}.
\begin{lemma}\label{lem: model reduction using normal reduction}
    Suppose Assumptions~\ref{as: normal cauchy} and \ref{as: A bounded} hold. Then \[\Mz-\Ms\ge \nu\varsigma_k\knormc\amin\min\left(\an\Delta_k,\frac{\amin\|\BFc_k\|}{1+\amax^2}\right). \]
\end{lemma}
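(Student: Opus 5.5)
The plan is to chain three facts: the penalty-update inequality \eqref{eq: allowable sigma}, the Cauchy-type decrease of Assumption~\ref{as: normal cauchy}, and the singular value bounds of Assumption~\ref{as: A bounded}.

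\textbf{Step 1: reduce to the normal model decrease.} Lines~\ref{line: set sigma c}--\ref{line: set sigma} of Algorithm~\ref{alg: sqp} choose $\varsigma_k \ge \sigmac_k$. When $\varsigma_{k-1}<\sigmac_k$ this holds because $\varsigma_k$ is defined as a maximum that includes $\sigmac_k$; otherwise $\varsigma_k=\varsigma_{k-1}\ge\sigmac_k$. If $\mnz-\mnn>0$, then \eqref{eq:model-decompose} together with $\varsigma_k\ge\sigmac_k$ gives
\[
\Mz-\Ms-\nu\varsigma_k(\mnz-\mnn) = (\Mtz-\Mtt)+(\Qz-\Qn)+(1-\nu)\varsigma_k(\mnz-\mnn) \ge 0,
\]
where the last inequality is just the definition \eqref{eq: sigma c} rearranged. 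Hence \eqref{eq: allowable sigma} holds, and it suffices to lower-bound $\nu\varsigma_k(\mnz-\mnn)$.

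The degenerate case is $\mnz-\mnn=0$. Then \eqref{eq: sigma c} is ill-defined, but Assumption~\ref{as: normal cauchy} forces $\|\sfA_k^\top\BFc_k\|=0$ or $\Delta_k=0$. Under LICQ this means $\BFc_k=0$, so the right-hand side of the lemma is $0$. In that case we need $\Mz-\Ms\ge 0$, which follows because $\BFsn_k=0$ and $\Mz-\Ms=\Mtz-\Mtt\ge0$. Here I will appeal to Assumption~\ref{as: normal enough} and to the convention that the ratio term in $\sigmac_k$ is dropped; this is the one delicate bookkeeping point.

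\textbf{Step 2: apply the normal Cauchy decrease.} Assumption~\ref{as: normal cauchy} gives
\[
\mnz-\mnn\ge \knormc\frac{\|\sfA_k^\top\BFc_k\|}{\|\BFc_k\|}\min\Big(\an\Delta_k,\frac{\amin\|\BFc_k\|}{1+\|\sfA_k^\top\sfA_k\|}\Big).
\]
Assumption~\ref{as: A bounded} gives $\|\sfA_k^\top\BFc_k\|\ge\amin\|\BFc_k\|$, so the prefactor is at least $\knormc\amin$. It also gives $\|\sfA_k^\top\sfA_k\|=\sigma_{\max}(\sfA_k)^2\le\amax^2$, so the second argument of the min is at least $\amin\|\BFc_k\|/(1+\amax^2)$. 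Since the min is monotone in each argument, multiplying by $\nu\varsigma_k$ yields the claim.

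There is no real obstacle in Steps 1 and 2; the only care needed is the $\BFc_k=0$ case handled at the end of Step 1.
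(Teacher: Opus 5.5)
Your proposal is correct and follows the paper's proof. You use $\varsigma_k\ge\sigmac_k$ together with \eqref{eq: sigma c} and \eqref{eq:model-decompose} to obtain \eqref{eq: allowable sigma}, then apply Assumption~\ref{as: normal cauchy}. Beyond the paper, you spell out the Assumption~\ref{as: A bounded} substitutions and the degenerate case $\BFc_k=0$, where $\BFsn_k=0$ and $\Mz-\Ms=\Mtz-\Mtt\ge 0$ by Assumptions~\ref{as: normal enough} and \ref{as: tangent cauchy}; the paper leaves both implicit.
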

\begin{proof}
    Note that $\varsigma_k\ge \varsigma_k^c$
    along with \eqref{eq: sigma c} leads to $$\Qz-\Qn+\Mtz-\Mtt\ge (\nu-1)\varsigma_k(\mnz-\mnn).$$ Using \eqref{eq:model-decompose} 
    and Assumption~\ref{as: normal cauchy} completes the proof.
\end{proof}
The remaining ingredient is to show that the merit function remains bounded below with probability 1, which holds given the definition of $\Psik$ for any sufficiently large $k$ such that Theorem~\ref{th: est error bound} is satisfied. 
\begin{lemma}\label{lem: finite Psi}
    For any $\kerror>0$, $\mbP(\Psik<-\kerror/\sigma_{\min}\ \text{i.o.})=0$. That is,  $$\mbP(\Psik\to-\infty)=0.$$
\end{lemma}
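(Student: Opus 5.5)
The plan is to bound $\Psik$ from below term by term, using its definition
\[
\Psik=\frac{\Fbar_k-\fmin}{\varsigma_k}+\|\BFc_k\|,
\]
and then to control the one term that can be negative with Theorem~\ref{th: est error bound}. Since $\|\BFc_k\|\ge 0$, the only way $\Psik$ can fall below zero is through $\Fbar_k-\fmin<0$. Write $\Fbar_k-\fmin=(\Fbar_k-f_k)+(f_k-\fmin)$. By Assumption~\ref{as: lip and bounded}, $f_k-\fmin\ge 0$, so
\[
\Psik\ge \frac{\Fbar_k-f_k}{\varsigma_k}\ge -\frac{|\Fbar_k-f_k|}{\varsigma_k}.
\]

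Next I use the penalty parameter. Algorithm~\ref{alg: sqp} only ever increases $\varsigma_k$, starting from $\sigmin$, so $\varsigma_k\ge\sigmin>0$ for every $k$. Hence, whenever $|\Fbar_k-f_k|\le\kerror$, we get $\Psik\ge-\kerror/\sigmin$. Equivalently, the event $\{\Psik<-\kerror/\sigmin\}$ is contained in the event $\{|\Fbar(\BFX_k,N_k^0)-f(\BFX_k)|>\kerror\}$.

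I then apply Theorem~\ref{th: est error bound} with $\alpha=0$ and $i=0$. It states that the estimation error at the iterate exceeds $\kerror\Delta_k^0=\kerror$ only finitely often, almost surely. By the inclusion above, $\mbP(\Psik<-\kerror/\sigmin\ \text{i.o.})=0$.

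For the second claim, fix any $\kerror>0$ and let $\Omega_0$ be the probability-one event just obtained. On $\Omega_0$ there is a random $K$ with $\Psik\ge-\kerror/\sigmin$ for all $k\ge K$. Therefore $\liminf_k\Psik>-\infty$ on $\Omega_0$, and in particular $\Psik\not\to-\infty$ there, which gives $\mbP(\Psik\to-\infty)=0$.

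I expect the main obstacle to be the legitimacy of applying Theorem~\ref{th: est error bound} along the random iterate sequence. The point $\BFX_k$ and the sample size $N_k^0$ are random, and $\BFX_{k+1}$ may equal $\BFX_k$ after an unsuccessful iteration. I would rely on the theorem exactly as stated, since it is formulated for the adaptive rule \eqref{eq: adapt N} with arbitrary $i$ along the generated sequence. I would note only that $\ssinf_k\to\infty$ is what makes the tail bounds summable, so Borel--Cantelli yields the "i.o." statement. Everything else is elementary.
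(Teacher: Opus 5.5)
Your proof is correct and is the argument the paper only sketches ("holds given the definition of $\Psik$ for any sufficiently large $k$ such that Theorem~\ref{th: est error bound} is satisfied"): drop $\|\BFc_k\|\ge 0$ and $f_k-\fmin\ge 0$, use $\varsigma_k\ge\sigmin$, and apply Theorem~\ref{th: est error bound} with $\alpha=0$ and $i=0$, so that almost surely $\Psik\ge-\kerror/\sigmin$ eventually. Reading the statement's $\sigma_{\min}$ as the initial penalty $\sigmin$ is the intended interpretation; in any case, since $\kerror>0$ is arbitrary, the constant in the threshold does not matter.
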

We are now ready to prove an important result for derivative-free trust region methods, namely that the trust-region radius will vanish almost surely.
\begin{lemma}
    Suppose Assumptions~\ref{as: bounded hess}, \ref{as: ortho AN}, \ref{as: normal enough}, \ref{as: normal cauchy}, \ref{as: tangent cauchy}, and \ref{as: A bounded} hold. Then $$\mbP(\Delta_k\to 0)=1.$$ \label{lem: delta converges}
\end{lemma}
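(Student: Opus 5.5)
Our approach is the standard ASTRO-DF argument: we show that $\sum_k \Delta_k^2<\infty$ almost surely, which forces $\Delta_k\to 0$. We fix a sample path in the probability-one event on which all the ``i.o.'' statements above fail to happen infinitely often. These are Theorems~\ref{th: est error bound} and \ref{th: gradient error bound}, Lemma~\ref{lem: model improvment bound by tangent}, Corollary~\ref{cor: bounded pentalty}, and Lemma~\ref{lem: finite Psi}. The intersection of countably many probability-one events has probability one, so there is a (random) index $K$ after which all these bounds hold deterministically and $\varsigma_k=\sigmax$. We then argue on this path for $k\ge K$, using the scaled merit function $\Psi_k(\varsigma_k)$, which is now $\Psi_k(\sigmax)$ for all $k\ge K$.

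\textbf{Step 1: lower-bound the model decrease on successful iterations by a multiple of $\Delta_k^2$.} On a successful iteration we have $\pi_k\ge\mu\Delta_k$. So either $\|\BFc_k\|\ge \mu\Delta_k/2$ or $\|(\sfAN_k)^\top\BFG_k\|\ge\mu\Delta_k/2$.

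In the first case, Lemma~\ref{lem: model reduction using normal reduction} gives $\Mz-\Ms\ge c_1\sigmin\Delta_k$ with $c_1=\nu\knormc\amin\min(\an,\amin\mu/(2(1+\amax^2)))$. Since $\Delta_k\le\Deltamax$, this is at least $c_1\sigmin\Delta_k^2/\Deltamax$.

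In the second case we use Lemma~\ref{lem: model improvment bound by tangent} and Assumption~\ref{as: tangent cauchy}. We need a lower bound on $\|\BFGN_k\|$. We have $\|\BFGN_k\|\ge\|(\sfAN_k)^\top\BFG_k\|-\khess\|\BFsn_k\|$, and $\|\BFsn_k\|\le\|\BFc_k\|/\ksn$ by Lemma~\ref{lem:norm step bound by c}. We subdivide once more:
\begin{itemize}
\item If $\|\BFc_k\|\ge \ksn\mu\Delta_k/(4\khess)$, we are back in the normal-reduction case with a different constant.
\item Otherwise $\|\BFGN_k\|\ge\mu\Delta_k/4$, and Assumption~\ref{as: tangent cauchy} together with Lemma~\ref{lem: model improvment bound by tangent} gives $\Mz-\Ms\ge\kmt\ktanc(\mu/4)\min(\at,\mu/(4\khess))\Delta_k^2$.
\end{itemize}
Altogether, for $k\ge K$ successful, $\Mz-\Ms\ge c\,\Delta_k^2$ for a deterministic constant $c>0$.

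\textbf{Step 2: telescoping.} On successful iterations,
\[
\Psi_{k+1}(\varsigma_{k+1})-\Psi_k(\varsigma_k)\le-\eta c\Delta_k^2/\sigmax .
\]
On unsuccessful iterations, \eqref{eq:unsuccess psi reduction} with $\alpha=2$ bounds the increase by $2\kerror\Delta_k^2/\sigmin$.

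The obstacle is that the unsuccessful increases are not obviously summable. We handle this with the geometric structure of $\Delta_k$. Each successful iteration $j$ is followed by a run of unsuccessful ones with radii $\trdec^i\trinc\Delta_j$, $i\ge 0$ (and any initial unsuccessful run before the first success is a finite geometric sum). Hence
\[
\sum_{k\ \mathrm{unsucc},\,k\ge K}\Delta_k^2\le C_0+\frac{\trinc^2}{1-\trdec^2}\sum_{j\ \mathrm{succ},\,j\ge K}\Delta_j^2 .
\]
Likewise, $\sum_k\Delta_k^2$ is bounded by a constant times the sum over successful $j$ plus a finite term. If $\sum_{\mathrm{succ}}\Delta_j^2=\infty$, telescoping would give
\[
\Psi_k\le \Psi_K-\Bigl(\frac{\eta c}{\sigmax}-\frac{2\kerror\trinc^2}{\sigmin(1-\trdec^2)}\Bigr)\sum_{\mathrm{succ}}\Delta_j^2+C\to-\infty .
\]
This holds provided we choose $\kerror$ small enough that the bracket is positive. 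This is allowed because $\kerror$ is a free analysis constant in Theorems~\ref{th: est error bound} and \ref{th: gradient error bound}. The constant $c$ depends on $\kerror$ only through $\kmg$ and $\kmt$, and these stay bounded as $\kerror\to 0$, so we fix $c$ using an upper bound such as $\kerror\le1$ first. The resulting $\Psi_k\to-\infty$ contradicts Lemma~\ref{lem: finite Psi}.

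Hence $\sum_k\Delta_k^2<\infty$ on the path, so $\Delta_k\to0$. Since the path was arbitrary in a probability-one set, $\mbP(\Delta_k\to0)=1$. If there are only finitely many successes, $\Delta_k$ decreases geometrically and the claim is immediate.

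I expect the main difficulty to be Step 1, specifically the mixed case where $\|(\sfAN_k)^\top\BFG_k\|$ is large but the normal step perturbs $\BFGN_k$. A second delicate point is ensuring that the order of choosing $\kerror$ versus the constants $c$, $\kmt$, $\sigmax$ is not circular.
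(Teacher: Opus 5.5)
Your proposal is correct and takes essentially the same route as the paper. Both arguments split successful iterations into a feasibility case (handled by Lemma~\ref{lem: model reduction using normal reduction}) and a tangential case (handled by $\|\BFGN_k-(\sfAN_k)^\top\BFG_k\|\le\frac{\khess}{\ksn}\|\BFc_k\|$, Lemma~\ref{lem: model improvment bound by tangent} and Assumption~\ref{as: tangent cauchy}), charge each run of unsuccessful increases to the preceding success, and take $\kerror$ small to contradict Lemma~\ref{lem: finite Psi}. The differences are cosmetic: you use a two-stage threshold instead of the paper's single threshold $\frac{\mu}{2(1+\khess/\ksn)}$, and you write out the geometric factor $\trinc^2/(1-\trdec^2)$ that the paper's block bound omits.
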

\begin{proof}
    If we have finitely many successful iterations, then this holds trivially. Suppose we have infinitely many successful iterations, the set of which we call $\mathcal{S}$. For all $k\in\mcS$, if $$\|\BFc_k\| \ge \frac{\mu}{2(1+\frac{\khess}{\ksn})}\Delta_k$$ then we  can use Lemma~\ref{lem: model reduction using normal reduction} to write 
    \[\Psikp-\Psik\le-\eta\nu\knormc\amin\min\left(\an,\frac{\amin\mu}{2(1+\amax^2)(1+\frac{\khess}{\ksn})}\right)\Delta_k:=-\kpsirc\Delta_k.\] 
    Now consider the case of $$\|\BFc_k\| < \frac{\mu}{2(1+\frac{\khess}{\ksn})}\Delta_k.$$ Since $k\in\mcS$,  $$\|\BFc_k\|+\|(\sfAN_k)^\top \BFG_k\|\ge \mu\Delta_k$$ by Algorithm~\ref{alg: sqp}. Hence, $$\|(\sfAN_k)^\top \BFG_k\|\ge \left(\mu-\frac{\mu}{2(1+\frac{\khess}{\ksn})}\right)\Delta_k.$$ 
    Next, we observe from the definition of $\BFGN_k$ and Lemma~\ref{lem:norm step bound by c} that $$\|\BFGN_k-(\sfAN_k)^\top\BFG_k\|\le \|\sfAN_k\|\|\sfH_k\|\|\BFsn_k\|\le(\khess)\|\BFsn_k\|\le \frac{\khess}{\ksn}\|\BFc_k\|,$$
    but we also know that 
    \[\|\BFGN_k\|\ge \|(\sfAN_k)^\top\BFG_k\|-\|(\sfAN_k)^\top\BFG_k-\BFGN_k\|\ge \left(\mu-\frac{\mu}{2(1+\frac{\khess}{\ksn})}-\frac{\khess\mu}{2\ksn(1+\frac{\khess}{\ksn})}\right)\Delta_k=\frac{\mu}{2}\Delta_k.\] Therefore, using Lemma~\ref{lem: model improvment bound by tangent} and Assumption~\ref{as: tangent cauchy}, we get that 
    \[\Psikp-\Psik\le-\eta\frac{\mu}{2\sigmax}\kmt\ktanc\min\left(\at,\frac{\mu}{2\khess}\right)\Delta_k^2:=-\kpsirg\Delta_k^2.\] We hence write that $\Psikp-\Psik\le-\min\left(\kpsirc\Delta_k,\kpsirg\Delta_k^2\right)$ for all $k\in\mcS$.

    However, we also know from Theorem~\ref{th: est error bound} that, given $\kerror>0$ for $k\notin \mcS$ large enough, $$\Psikp-\Psik\le 2\kerror\sigmin^{-1}\min\left(\Delta_k,\Delta_k^2\right).$$ Let $K$ be the random large iteration number such that for all $k\ge K$ the former holds. In fact, between two large enough consecutive successful iterations, $k_1$ and $k_2$, we can write $$
    \sum_{k=k_1+1}^{k_2-1}\Psikp-\Psik\le 2\kerror\sigmin^{-1}\min\left(\Delta_{k_1},\Delta_{k_1}^2\right).$$ If we choose $\kerror<\frac{\sigmin}{2}\min(\kpsirc,\kpsirg)$ we get
    \begin{align*}
     -\infty&<\Psi_{\min}-\Psi_0\le \sum_{k}\Psikp-\Psik \\
     &= \sum_{\substack{k< K
                  }}\Psikp-\Psik + \sum_{\substack{k\ge K
                  }}\Psikp-\Psik   \\
     & \le \sum_{\substack{k< K
     \\
                  k\in\mcS
                  }}
                  \Psikp-\Psik +\sum_{\substack{k\ge K\\
                  k\in\mcS}} \underbrace{\left(-\min(\kpsirc,\kpsirg)+\frac{2\kerror}{\sigmin}\right)}_{<0}\min(\Delta_k,\Delta_k^2),
    \end{align*} completing the proof. 
\end{proof}

The next lemma makes an important connection between $\Delta_k$ and both first-order optimality criteria.
\begin{lemma}\label{lem:delta and optimality}
    If Assumptions~\ref{as: bounded true Hess}, \ref{as: bounded lag poly}, \ref{as: bounded hess}, \ref{as: ortho AN}, \ref{as: normal enough}, \ref{as: normal cauchy}, \ref{as: tangent cauchy}, and \ref{as: A bounded} hold, then $$\mbP\left(\left(\Delta_k<\kdelta\pi_k\right)\cap\left(k\notin\mcS\right)\text{ i.o.}\right)=0$$ for 
    \[\kdelta=\min\left(\frac{1}{\mu},\frac{1}{\at\khess},\frac{(1-\eta)\at\ktanc}{\kphi},\frac{(1-\eta)\nu\sigmin\knormal\amin\an}{\kphi(\kuc+\kug+\kmg\Deltamax)},\frac{1}{\frac{\kmg}{\Deltamax}+\kmg+\frac{\an(1+\amax^2)}{\amin}}\right).\]
    where $\kphi=\kerr(1+\sigmax)$, and $\kerr$ is the constant defined in Lemma~\ref{lem: model error}.
\end{lemma}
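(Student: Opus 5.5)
We argue by contradiction on a fixed sample path. Restrict to the probability-one event on which, for all sufficiently large $k$, three things hold: the conclusions of Lemma~\ref{lem: model error} (with $\|\BFS_k\|\le a\Delta_k$), Theorem~\ref{th: gradient error bound}, and Corollary~\ref{cor: bounded pentalty} (so $\varsigma_k=\sigmax$). On this event we show that every large $k$ with $\Delta_k<\kdelta\pi_k$ is successful. An unsuccessful iteration means $\rho_k<\eta$ or $\pi_k<\mu\Delta_k$. The second is excluded immediately, since $\kdelta\le 1/\mu$ gives $\pi_k>\Delta_k/\kdelta\ge\mu\Delta_k$. It therefore remains to show $\rho_k\ge\eta$.

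By Lemma~\ref{lem: model error}, $|\Phiz-\Mz|$ and $|\Phis-\Ms|$ are both bounded by $\kphi\Delta_k^2$ with $\kphi=\kerr(1+\sigmax)$. At the center point this should in fact be even easier, since $\Mz=\Fbar_k+\varsigma_k\|\BFc_k\|=\Phiz$ exactly. Hence
$$|\rho_k-1|\le\frac{\kphi\Delta_k^2}{\Mz-\Ms},$$
and it suffices to show $\Mz-\Ms\ge \kphi\Delta_k^2/(1-\eta)$. We split on which term of $\pi_k=\|\BFc_k\|+\|(\sfAN_k)^\top\BFG_k\|$ dominates.

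\emph{Case 1: $\|\BFc_k\|\ge \pi_k\,\theta$}, where $\theta$ is the ratio built into the fourth entry of $\kdelta$ (roughly $\|\BFc_k\|$ is a fixed fraction of $\kuc+\kug+\kmg\Deltamax$, the a.s.\ eventual bound on $\pi_k$). Here we use Lemma~\ref{lem: model reduction using normal reduction}:
$$\Mz-\Ms\ge\nu\sigmin\knormc\amin\min\Bigl(\an\Delta_k,\frac{\amin\|\BFc_k\|}{1+\amax^2}\Bigr).$$
The last entry of $\kdelta$, which contains $\an(1+\amax^2)/\amin$, is meant to ensure that the minimum is attained at $\an\Delta_k$. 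The fourth entry of $\kdelta$ then converts $\Delta_k<\kdelta\pi_k$, with $\pi_k\le\kuc+\kug+\kmg\Deltamax$, into $\nu\sigmin\knormc\amin\an\Delta_k\ge\kphi\Delta_k^2/(1-\eta)$.

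\emph{Case 2: the gradient term dominates.} We lower-bound $\|\BFGN_k\|$ by $\|(\sfAN_k)^\top\BFG_k\|$ minus $\frac{\khess}{\ksn}\|\BFc_k\|$, exactly as in the proof of Lemma~\ref{lem: delta converges}. This gives $\|\BFGN_k\|\gtrsim\Delta_k/\kdelta$. Lemma~\ref{lem: model improvment bound by tangent} and Assumption~\ref{as: tangent cauchy} then give
$$\Mz-\Ms\ge\kmt\ktanc\|\BFGN_k\|\min\bigl(\at\Delta_k,\|\BFGN_k\|/\khess\bigr).$$
The entry $1/(\at\khess)$ of $\kdelta$ makes the minimum equal to $\at\Delta_k$, and the entry $(1-\eta)\at\ktanc/\kphi$ finishes the estimate. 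Theorem~\ref{th: gradient error bound} (the $\kmg$ terms in the last entry) is used to control the discrepancy between $\BFG_k$ and $\nabla f$ where needed.

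The main obstacle is the bookkeeping in the case split. The threshold separating the cases must be chosen so that in each case the relevant quantity ($\|\BFc_k\|$ or $\|\BFGN_k\|$) is at least a constant multiple of $\Delta_k/\kdelta$. This has to be done after absorbing the coupling term $\frac{\khess}{\ksn}\|\BFc_k\|$, and possibly the factor $\kmt$, which does not appear explicitly in $\kdelta$. I would first fix the split at $\|\BFc_k\|\ge\frac12\pi_k$ versus its complement, then check that each entry of $\kdelta$ supplies the needed inequality, adjusting constants if a factor such as $\kmt$ or $\knormc$ versus $\knormal$ must be absorbed. Finally, since these conclusions hold for all large $k$ on a probability-one event, the event in the statement occurs only finitely often a.s.
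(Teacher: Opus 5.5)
\emph{Verdict:} your approach is correct and more complete than the paper's, but it proves the lemma only for a smaller $\kdelta>0$, not the stated one.

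Your skeleton is the paper's. The bound $\kdelta\le 1/\mu$ secures $\pi_k\ge\mu\Delta_k$. The identity $\Phiz=\Mz$ together with Lemma~\ref{lem: model error} gives $|1-\rho_k|\le\kphi\Delta_k^2/(\Mz-\Ms)$. When the constraint violation dominates, Lemma~\ref{lem: model reduction using normal reduction}, the eventual bound $\pi_k\le\kuc+\kug+\kmg\Deltamax$, and the fourth entry of $\kdelta$ finish the argument.

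The case split is genuinely different. The paper first isolates $\|\BFc_k\|=0$. There $\BFsn_k=0$, so $\Mz-\Ms=\Mtz-\Mtt$ exactly and $\|\BFGN_k\|=\pi_k$. This is why the entries $1/(\at\khess)$ and $(1-\eta)\at\ktanc/\kphi$ carry neither $\kmt$ nor a coupling constant. For $\|\BFc_k\|>0$ the paper splits on $\|\BFc_k\|\gtrless\frac{(1+\amax^2)\an}{\amin}\Delta_k$ rather than on a fraction of $\pi_k$. In the large subcase the minimum in Lemma~\ref{lem: model reduction using normal reduction} is then exactly $\an\Delta_k$, with no loss. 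The small subcase is declared to contradict the last entry of $\kdelta$.

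As you anticipated, your route does not reproduce the stated constant. The threshold must be $\theta=\bigl(2(1+\khess/\ksn)\bigr)^{-1}$, as in Lemma~\ref{lem: delta converges}. With $\theta=\frac12$, the bound $\|\BFGN_k\|\ge\|(\sfAN_k)^\top\BFG_k\|-\frac{\khess}{\ksn}\|\BFc_k\|$ is vacuous once $\khess\ge\ksn$. With the correct $\theta$:
\begin{itemize}
\item Case 2 needs the third entry multiplied by something like $\kmt/4$.
\item Case 1 needs the fourth entry multiplied by $\theta$.
\item The fourth entry should contain $\knormc$, not $\knormal$. The latter is a typo; the paper's own computation uses $\knormc$.
\end{itemize}
The resulting smaller $\kdelta>0$ is all the final theorem needs.

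What your route buys is completeness. In the paper's last subcase, the two displayed bounds combine only to $\bigl(\frac{1}{\kdelta}-\kmg-\frac{(1+\amax^2)\an}{\amin}\bigr)\Delta_k\le\kug$. The last entry of $\kdelta$ makes the bracket at least $\kmg/\Deltamax$, and the inequality is perfectly satisfiable for small $\Delta_k$. So there is no contradiction. The regime $0<\|\BFc_k\|=O(\Delta_k)$ with $\pi_k$ carried by $\|(\sfAN_k)^\top\BFG_k\|$ is left untreated. Your Case 2 closes exactly that gap, at the price of the $\kmt$ and $\theta$ factors. It uses the tangent Cauchy decrease, Lemma~\ref{lem: model improvment bound by tangent}, and the coupling estimate from Lemma~\ref{lem: delta converges}.
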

\begin{proof}
Let $\omega\in\Omega_1$ where $\Omega_1$ is the largest set of probability 1. We show that if $\Delta_k(\omega)\le \kdelta\pi_k(\omega)$, then iteration $k$ will eventually have to be successful. For ease of exposition, we drop $\omega$ henceforth. 

Note, $\kdelta \le \mu^{-1}$ implies that $\mu\Delta\le \pi_k$; in other words, the second success criterion holds. To prove that when $k$ is large enough, it must be successful, we only need to show that eventually $\rho_k\ge \eta$. From Lemma~\ref{lem: model error} we know for large enough $k$, $|\Phi_k^s-\Ms|\le \kphi\Delta_k^2$ almost surely.

If $\|\BFc_k\|=0$, then $$\|\BFGN_k\|=\|(\sfAN_k)^\top \BFG_k\|$$ and $$\Mz-\Ms=\Mtz-\Mtt \ge \ktanc \|\BFGN_k\|\min\left(\at\Delta_k,\frac{\|\BFGN_k\|}{\khess}\right),$$ but $$\pi_k=\|(\sfAN_k)^\top \BFG_k\|=\|\BFGN_k\|\ge \kdelta^{-1}\Delta_k.$$ So, $$\min\left(\at\Delta_k,\frac{\|\BFGN_k\|}{\khess}\right)\ge \Delta_k\min\left(\at,\frac{1}{\kdelta\khess}\right)=\at\Delta_k$$ since $\kdelta\le (\at\khess)^{-1}$. Therefore, $$|1-\rho_k|\le\frac{\kphi\Delta_k^2}{\ktanc\at\kdelta^{-1}\Delta_k^2}\le 1-\eta$$ since $$\kdelta\le \frac{(1-\eta)\ktanc\at}{\kphi}.$$

If $\|\BFc_k\|>0$, first note, $$\kdelta^{-1}\Delta_k\le \|(\sfAN_k)^\top\BFG_k\|+\|\BFc_k\|\le \|\BFG_k-\nabla f(\BFX_k)\|+\kuc+\kug.$$ Hence $\Delta_k\le \kdelta(\kuc+\kug+\kmg\Deltamax)$. Now we look at two cases. First,  $$\|\BFc_k\|>\frac{1+\amax^2}{\amin}\an\Delta_k.$$ Here, we get, $$|1-\rho_k|\le\frac{\kphi\kdelta(\kuc+\kug+\kmg\Deltamax)}{\nu\sigmin\knormc\amin\an}\le  1-\eta.$$  
Alternatively, if $$\|\BFc_k\|\le \frac{1+\amax^2}{\amin}\an\Delta_k,$$ 
since we know $$\|\BFc_k\|\ge \frac{1}{\kdelta}\Delta_k-\|(\sfAN_k)^\top\BFG_k\|\ge (\frac{1}{\kdelta}-\kmg)\Delta_k-\kug,$$  we reach a contradiction with the definition of $\kdelta$.
\end{proof}
The consequence of Lemma~\ref{lem:delta and optimality} is that in each sequence generated by Algorithm~\ref{alg: sqp} with a nonzero probability, for large enough $k$, $\Delta_k\ge \trdec\kdelta\pi_k$ since as soon as the radius becomes small enough, the next iteration will become successful, expanding the radius. We now have all the ingredients to show the convergence of SQP-ASTRO-DF to first-order optimality almost surely.

\begin{theorem}
    If Assumptions~\ref{as: bounded true Hess}
    --\ref{as: A bounded} hold, then the iterates generated by Algorithm~\ref{alg: sqp} satisfy
    \begin{itemize}
        \item[(i)] $\mbP\left(\|\BFc_k\|\to 0\right)=1$;
        \item[(ii)] $\mbP\left(\|(\sfAN_k)^\top\nabla f(\BFX_k)\|\to 0\right)=1$.
    \end{itemize}
\end{theorem}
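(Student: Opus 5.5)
We plan to combine Lemma~\ref{lem: delta converges} and Lemma~\ref{lem:delta and optimality} to show that $\pi_k\to 0$ almost surely, and then transfer the statement about the estimated gradient $\BFG_k$ to the true gradient $\nabla f(\BFX_k)$ using Theorem~\ref{th: gradient error bound}.

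Fix $\omega$ in the intersection of the probability-one events on which
\begin{itemize}
\item[(a)] $\Delta_k\to 0$, by Lemma~\ref{lem: delta converges};
\item[(b)] $(\Delta_k<\kdelta\pi_k)\cap(k\notin\mcS)$ happens only finitely often, by Lemma~\ref{lem:delta and optimality};
\item[(c)] $\|\nabla Q_k(\BFX_k)-\nabla f(\BFX_k)\|\le\kmg\Delta_k$ eventually, by Theorem~\ref{th: gradient error bound}.
\end{itemize}
A countable intersection of probability-one events still has probability one, so this choice is legitimate.

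We first show $\pi_k\to0$. Let $K$ be such that (b) and (c) hold for all $k\ge K$. We claim that for every $k\ge K$, $\pi_k\le \Delta_k/\kdelta$ or $\pi_k\le \Delta_{k+1}/(\trdec\kdelta)$ up to a bounded factor. The simplest route is the following.
\begin{itemize}
\item If $k\notin\mcS$ and $k\ge K$, then (b) gives $\pi_k\le \kdelta^{-1}\Delta_k$.
\item If $k\in\mcS$, we cannot immediately bound $\pi_k$ by $\Delta_k$. However, $\Delta_k\to0$ forces infinitely many unsuccessful iterations, and between them the iterate is fixed. So we look at how $\pi$ changes across a successful step.
\end{itemize}

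Handling successful iterations is the main obstacle. At a successful $k$, $\BFX_{k+1}=\BFX_k^s$ with $\|\BFS_k\|\le a\Delta_k$, and we need $\pi_{k+1}$ to be controlled by $\pi$ at some unsuccessful iteration. We plan to work with the true criticality measure $\tilde\pi_k:=\|\BFc_k\|+\|(\sfAN_k)^\top\nabla f(\BFX_k)\|$ rather than $\pi_k$. By (c), $|\pi_k-\tilde\pi_k|\le\kmg\Delta_k\to0$, so it suffices to show $\tilde\pi_k\to0$.

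For a successful $k$, let $j>k$ be the next unsuccessful iteration. Since $\Delta_k\to0$ and every successful iteration multiplies $\Delta$ by $\trinc\ge1$, any run of consecutive successes beginning at a large index has a total displacement $\sum_{i=k}^{j-1} a\Delta_i$. We would like this to be small. That requires either $\trinc=1$, or else a bound on run lengths, for instance from $\Delta_j\to0$, which forces every run to end; the sum is then at most $a\Delta_j\sum_{m\ge1}\trinc^{-m}$ when $\trinc>1$.

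If the displacement is small, continuity of $\BFc$, of $\nabla f$ (Assumption~\ref{as: lip and bounded}) and of $\sfA$ gives $|\tilde\pi_k-\tilde\pi_j|\to0$. Continuity of $\sfA$ and Assumption~\ref{as: A bounded} give continuity of the null-space projector $\sfAN(\sfAN)^\top$, and hence of $\|(\sfAN)^\top\nabla f\|=\|\sfAN(\sfAN)^\top\nabla f\|$ by Assumption~\ref{as: ortho AN}. At the unsuccessful index $j$ we have $\tilde\pi_j\le(\kdelta^{-1}+\kmg)\Delta_j\to0$. Combining these gives $\tilde\pi_k\to0$ along the whole sequence.

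The bound $\|\BFc_k\|\le\tilde\pi_k$ then gives (i). The bound $\|(\sfAN_k)^\top\nabla f(\BFX_k)\|\le\tilde\pi_k$ gives (ii).

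If the run-length argument turns out to be delicate, the fallback is a standard Zoutendijk-type contradiction. Suppose $\tilde\pi_k\ge2\epsilon$ infinitely often. Take a subsequence of excursions from $\{\tilde\pi\ge2\epsilon\}$ to $\{\tilde\pi<\epsilon\}$. Along these excursions the successful steps satisfy $\Psikp-\Psik\le-\min(\kpsirc\Delta_k,\kpsirg\Delta_k^2)$, as in the proof of Lemma~\ref{lem: delta converges}. With the refined constants that follow from $\pi_k\ge\epsilon$, the decrease becomes $-c\,\epsilon\Delta_k$. Summability then forces the total displacement $\sum\|\BFS_k\|$ over the excursions to vanish, while Lipschitz continuity requires a fixed displacement $\gtrsim\epsilon$ to change $\tilde\pi$ by $\epsilon$. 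This contradicts Lemma~\ref{lem: finite Psi}.
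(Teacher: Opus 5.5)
Your argument is correct in substance, but it follows a different route from the paper.

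\textbf{The paper's route.} The proof is two lines. It reads off from Lemma~\ref{lem:delta and optimality}, together with the remark after it, that eventually $\Delta_k\ge\kdelta\pi_k$ for \emph{all} $k$. Then $\Delta_k\to0$ gives $\pi_k\to0$ and hence (i). Part (ii) follows from the same triangle inequality with Theorem~\ref{th: gradient error bound} that you use.

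\textbf{Your route.} You correctly notice that the lemma only controls $\pi_k$ at \emph{unsuccessful} iterations. The paper reaches successful iterations only through the informal remark that a small radius forces success. You supply that missing step.
\begin{itemize}
\item For $\trinc>1$: once $\Delta_i<\Deltamax/\trinc$ there is no capping, so a run of successes ending at the unsuccessful index $j$ satisfies $\|\BFX_j-\BFX_k\|\le a\Delta_j/(\trinc-1)$. Lipschitz control of $\tilde\pi$ then gives $\tilde\pi_k\le C\Delta_{j(k)}\to0$. This replaces the paper's uniform inequality by one that is actually proved, and it needs no further use of $\Psi$.
\item Your excursion argument is the classical $\liminf$-to-$\lim$ argument and works for every $\trinc\ge1$.
\end{itemize}
The paper's version buys brevity; yours buys a complete proof.

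\textbf{Three points to fix.}
\begin{enumerate}
\item Your sentence on $\trinc=1$ is backwards. When $\trinc=1$ the displacement is $(j-k)a\Delta_k$, and nothing bounds the run length $j-k$; certainly $\Delta_j\to0$ does not. So the first argument fails there. Since Algorithm~\ref{alg: sqp} allows $\trinc=1$, the excursion argument is genuinely needed in that case.
\item The iterates are not assumed bounded, so you need uniform (Lipschitz) continuity, not mere continuity.
\begin{itemize}
\item Use $\lipg$ for $\nabla f$.
\item Expand $\BFc$ at an iterate using \eqref{eq: lip const bound} and $\|\sfA_j\|\le\amax$.
\item Note that \eqref{eq: lip const bound}, applied componentwise at both endpoints, forces $\sfA$ to be Lipschitz, with a constant depending only on $\lipc$ and $p$.
\item Bound the change in $\sfAN(\sfAN)^\top=I-\sfA^\top(\sfA\sfA^\top)^{-1}\sfA$ by $\amin^{-1}$ times the change in $\sfA$. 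This is legitimate because you only ever compare iterates, where Assumption~\ref{as: A bounded} applies.
\end{itemize}
\item In the excursion argument, Lemma~\ref{lem: finite Psi} alone does not give $\sum\Delta_k<\infty$ over the excursions. You must also note that the increases of $\Psi$ at unsuccessful iterations, each at most $2\kerror\sigmin^{-1}\Delta_k^2$, are summable. This follows from the charging argument in the proof of Lemma~\ref{lem: delta converges}, which yields $\sum_k\Delta_k^2<\infty$.
\end{enumerate}
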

\begin{proof}
    (i) is proven trivially given that from Lemma~\ref{lem: delta converges} $\Delta_k\to 0$ almost surely, and from Lemma~\ref{lem:delta and optimality}, we eventually have $$\Delta_k\ge \kdelta (\|\BFc_k\|+\|(\sfAN_k)^\top \BFG_k\|)$$ almost surely. 

    Two prove (ii), we also note from Theorem~\ref{th: gradient error bound} that for sufficiently large $k$ almost surely
    \[\|(\sfAN_k)^\top\nabla f(\BFX_k)\|\le \|(\sfAN_k)^\top \BFG_k\| + \|\nabla f(\BFX_k)-\BFG_k\|\le \|(\sfAN_k)^\top \BFG_k\| + \kmg\Delta_k,\] which completes the proof given that both terms vanish almost surely again by Lemma~\ref{lem: delta converges} and Lemma~\ref{lem:delta and optimality}. 
\end{proof}

\section{Algorithm Implementation and Numerical Experiment}
To test the performance of the proposed Algorithm~\ref{alg: sqp}, we perform a validation experiment on the SAN problem, described in Section~\ref{sec:intro} with $d=13, b=5$, and $e_j = 1$ for $j=1, \dots 13$, using an implementation of SQP-ASTRO-DF compatible with the SimOpt library \citep{eckhensha21,Eckman_SimOpt}. We run 10 macroreplications of Algorithm~\ref{alg: sqp} with parameters $\eta = 0.2,\: \trdec = 0.5, \:\trinc = 2.5, \: \an = 0.9, \: \at = 0.1, \: \nu = 0.01, \: \tau_1 = 3, \: \tau_2 = 2, \:\mu = 0.1, \: \sigbmax = 1 \times 10^8,$ and $ \Deltamax = 100$. We do not claim that these algorithm parameters are optimal, and future hyper-parameter tuning is likely warranted. We also set a feasibility tolerance of $0.01$, so that when $\|\BFc_k\| \leq 0.01$, we set the normal step $\BFsn_k = 0$ and allow the tangent step to use the entire trust-region radius with $\|\BFSt_k\| \leq\Delta_k$. Such a  less restrictive tolerance (than say $1 \times 10^{-4}$ or $1 \times 10^{-8}$) is reasonable for this problem given that in an operations engineering problem a cost constraint violation less than $\$0.01$ is negligible. 

Fig.~\ref{fig: results} shows the objective and constraint violation progress of macroreplications of SQP-ASTRO-DF over a budget of $20,000$ simulation runs, where constraint violation is measured by $\| \BFc (\BFx)\|$. In Fig.~\ref{fig: obj progress}, each macroprelication turns red once the feasibility tolerance has been satisfied, meaning that those iterates are ``feasible''. We observe that SQP-ASTRO-DF successfully 
improves the objective value (project completion time) and achieves terminal solutions satisfying the designated feasibility tolerance (cost constraint) on all macroreplications. Notably we do not compare SQP-ASTRO-DF against other DFO constrained algorithms here as we are not attempting to claim superior performance of our algorithm against others, which is a planned extension of this work. 

\begin{figure}
  \centering
  \subfloat[Objective Progress]{%
    \includegraphics[width=0.45\textwidth]{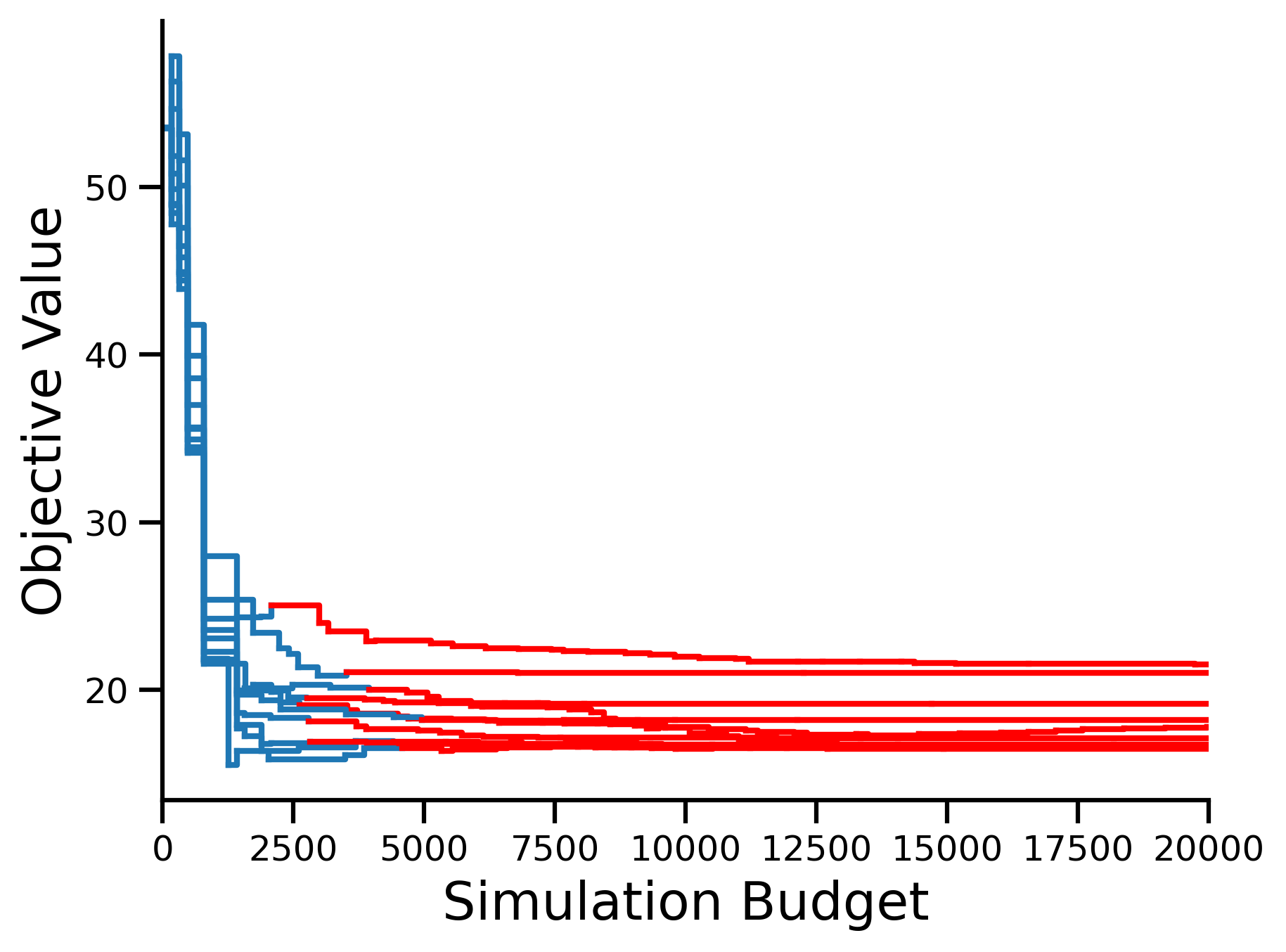}
    \label{fig: obj progress} 
  }
  \hspace{0.05\textwidth} 
  \subfloat[Constraint Violation Progress]{%
    \includegraphics[width=0.45\textwidth]{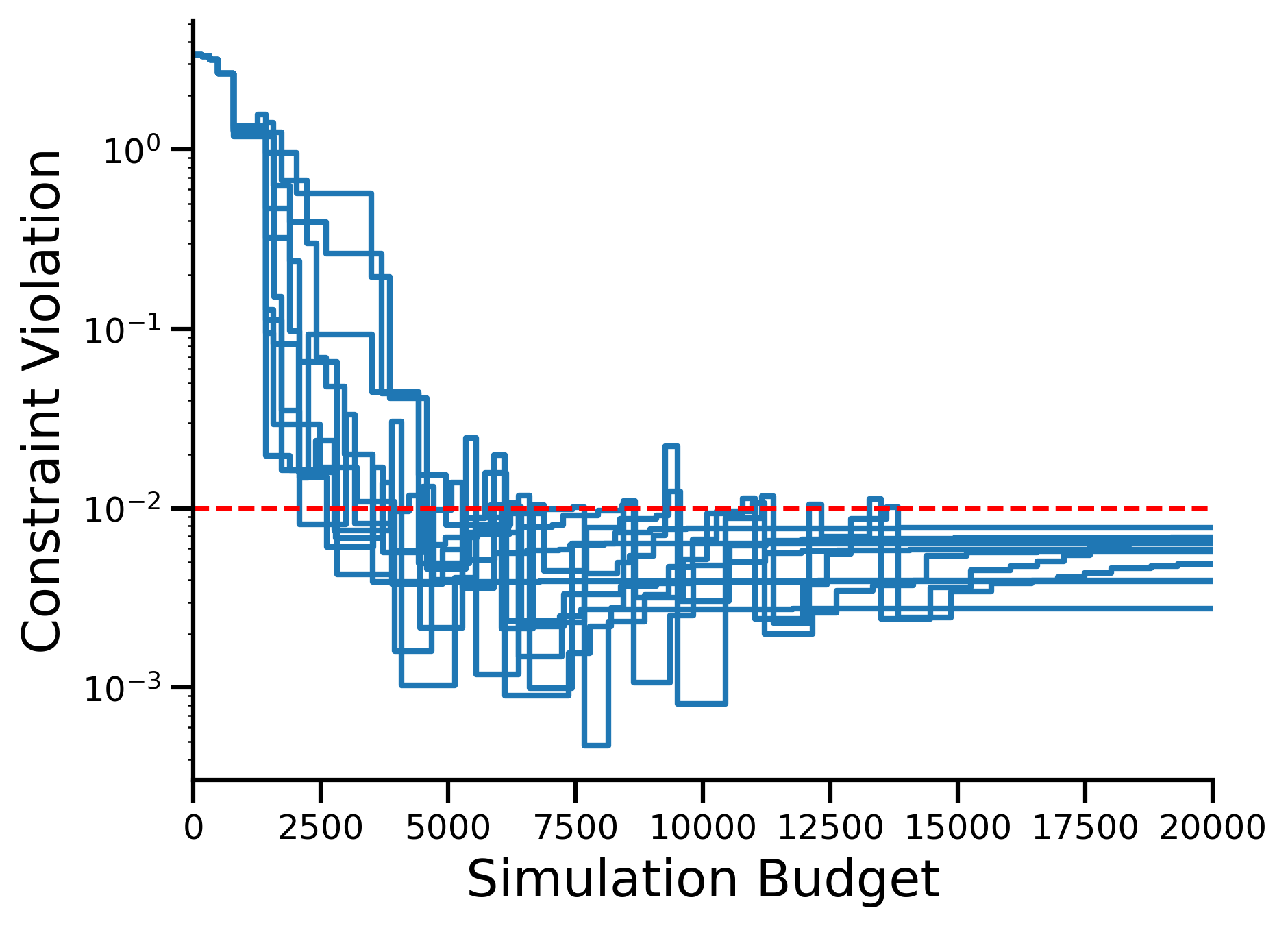}
    \label{fig: scatter} 
  }
  \caption{Performance of SQP-ASTRO-DF on SAN}\label{fig: results}
\end{figure}

\section{Conclusion}
This paper introduces SQP-ASTRO-DF, an algorithm for simulation optimization with deterministic nonlinear equality constraints. The algorithm adopts an adaptive sampling scheme that uses the trust-region radius as a proxy for stationarity, which must jointly reflect both feasibility and optimality. Building on the classical Byrd--Omojokun composite-step framework with minimal modification, we introduce a criticality measure $\pi_k$ that naturally handles this joint requirement. We establish almost-sure convergence of the algorithm and provide initial experimental validation showing convergence to feasible solutions. Directions for future work include comparative analysis against other algorithms and implementation improvements, extension to nonlinear inequality constraints, and derivative-free and stochastically constrained settings.

\section*{ACKNOWLEDGMENTS}
This research was partially supported by National Science Foundation Grant CMMI-2226347, OAC-2410949, Office of Naval Research Grant N000142412398, and Australian Research Council award DE240100006.
\footnotesize

\bibliographystyle{plain}
\bibliography{refs}
\vspace{0.5in}

\end{document}